 \DeclareMathOperator{\perm}{Sym}
\DeclareMathOperator{\aut}{Aut} 
 \DeclareMathOperator{\M}{M}
\DeclareMathOperator{\po}{P\Omega}
 \DeclareMathOperator{\frat}{Frat}
\DeclareMathOperator{\ssl}{SL} \DeclareMathOperator{\psl}{PSL}
\DeclareMathOperator{\psp}{PSp} 
\DeclareMathOperator{\sym}{Sym}
\DeclareMathOperator{\GL}{GL}
\DeclareMathOperator{\alt}{Alt}
\DeclareMathOperator{\End}{End} \DeclareMathOperator{\h}{{H^1}}
 \DeclareMathOperator{\diag}{diag}
\renewcommand{\emptyset}{\varnothing}
\newtheorem{thm}{Theorem}%[section]
\newtheorem{cor}[thm]{Corollary}
 \newtheorem{lemma}[thm]{Lemma}
\newtheorem{prop}[thm]{Proposition} 
\newtheorem{question}[]{Question} 
\numberwithin{equation}{section}
\renewcommand{\footnote}{\endnote}
\newcommand{\ignore}[1]{}\makeglossary
\begin{document}
\bibliographystyle{amsplain}
\subjclass{20F05}
\title{Invariable generation  with elements\\ of coprime prime-power order}
\author{Eloisa Detomi and Andrea Lucchini}
\address{
Eloisa Detomi and Andrea Lucchini, Universit\`a degli Studi di Padova,  Dipartimento di Matematica, Via Trieste 63, 35121 Padova, Italy}
%(2) Colva M. Roney-Dougal, University of St Andrews, Mathematical Institute, St Andrews, Fife KY16 9SS, Scotland}

%\begin{document}

\begin{abstract}A finite group $G$ is \emph{coprimely-invariably generated} if there exists a set of generators $\{g_1, \ldots , g_d\}$ of $G$ with the property that the orders $|g_1|, \ldots , |g_d|$ are pairwise coprime and that
 for all $x_1, \ldots, x_d \in G$ the set $\{g_1^{x_1}, \ldots, g_d^{x_d}\}$ generates $G$. In the particular case when $|g_1|, \ldots , |g_d|$ can be chosen to be prime-powers we say that $G$ is
\emph{prime-power coprimely-invariably generated}. We will discuss these properties,
 proving also that the second one is stronger than the first,
but that in the particular case of finite soluble groups they are equivalent.
\end{abstract}

\maketitle

\section{Introduction}

Following  \cite{dixon} and \cite{ig}, we say that
a subset $\{g_1, \ldots , g_d\}$ of a finite group $G$  invariably generates $G$ if
 $\{g_1^{x_1}, \ldots , g_d^{x_d}\}$ generates $G$ for every choice of $x_i \in G$.
%we say that  a finite group $G$ is invariably generated by
%a subset $\{g_1, \ldots , g_d\}$ of  $G$ if
% $\{g_1^{x_1}, \ldots , g_d^{x_d}\}$ generates $G$ for every choice of $x_i \in G$.
Then we say that a finite group $G$ is \emph{coprimely-invariably generated} (CIG, for short) if there exists a
 subset $X=\{g_1, \ldots , g_d\}$ of $G$ that invariably generates $G$ and with the property that the orders $|g_1|, \ldots , |g_d|$ are pairwise coprime; in the particular case when  $|g_1|, \ldots , |g_d|$ can be chosen to be prime-powers we say that $G$ is
\emph{``prime-power'' coprimely-invariably generated} (PCIG, for short). Using the classification of the finite
simple groups, we will prove in Theorem \ref{simple} that all the finite non abelian simple groups are PCIG.

\

Clearly a PCIG group is in particular coprimely-invariably generated.
In Proposition \ref{s_7} we will prove the existence of a finite CIG group $G$ which is not PCIG.
However we obtain the unexpected result that a finite soluble CIG group is PCIG.
Actually we prove a stronger result:
\begin{thm}\label{CIG=PCIG}Let $G$ be a finite soluble
group and assume that there exists a set $\{g_1,\dots,g_d\}$  of invariable generators of $G$ with the property that
% invariably generating $G$ such that
 $|g_1|, \ldots , |g_d|$ are pairwise coprime.
For each $i\in \{1,\dots, d\},$ we write $g_i=x_{i,1}\cdots x_{i,u_i}$ where the $x_{i,j}$ are  powers of $g_i$ of coprime prime-power order.
Then the set $$X=\{x_{i,j} \mid 1\leq i \leq d, 1\leq j \leq u_i\}$$ invariably generates $G.$
\end{thm}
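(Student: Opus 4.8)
The plan is to use the standard reformulation of invariable generation in terms of maximal subgroups: a subset $\{h_1,\dots,h_r\}$ invariably generates $G$ if and only if for every maximal subgroup $M<G$ there is some $h_k$ with no conjugate in $M$ (that is, $h_k^G\cap M=\varnothing$), since a tuple of conjugates fails to generate precisely when it lies in some maximal subgroup. Arguing by contraposition, I assume that $X$ does \emph{not} invariably generate $G$ and fix a maximal subgroup $M<G$ such that every $x_{i,j}$ has a conjugate in $M$. The goal then reduces to one clean statement: \emph{each $g_i$ has a conjugate in $M$}. This contradicts the hypothesis that $\{g_1,\dots,g_d\}$ invariably generates $G$, proving the theorem. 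Note that one implication is free: if $g_i$ is conjugate into $M$ then so is every power of $g_i$, hence every $x_{i,j}$; the content is the converse recombination, and this is exactly where solubility must enter.

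In a finite soluble group every maximal subgroup $M$ has prime-power index, say $|G:M|=q^{b}$. Passing to $\ov{G}=G/\Core_G(M)$, which is a primitive soluble group, I may replace $G$ by $\ov{G}$ and $M$ by $\ov{M}=M/\Core_G(M)$: since $\Core_G(M)\le M$, an element is conjugate into $M$ if and only if its image is conjugate into $\ov{M}$, and the primary decomposition, the commuting relations, and the orders all pass to the quotient. Now $\ov{G}=V\rtimes\ov{M}$, where $V=\soc(\ov{G})$ is an elementary abelian $q$-group on which $\ov{M}$ acts faithfully and $\ov{M}$ is a complement to $V$. A Schur--Zassenhaus argument shows that \emph{every} $q'$-element of $\ov{G}$ is conjugate, by an element of $V$, into the complement $\ov{M}$; hence every $g_i$ with $q\nmid|g_i|$ is automatically conjugate into $M$. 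Here the coprimality hypothesis is decisive: the primes occurring among \emph{all} the $x_{i,j}$ are pairwise distinct, so $q$ divides $|g_i|$ for at most one index $i=i_0$, and then $q$ is the prime of exactly one primary component, namely $x_{i_0,j_0}$, which is the $q$-part of $g_{i_0}$.

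It remains to prove the heart of the matter: if the $q$-part $s=x_{i_0,j_0}$ of $g=g_{i_0}$ is conjugate into $\ov{M}$, then $g$ is. Write $g=st$ with $t$ the (complementary) $q'$-part; after conjugating I may assume $s\in\ov{M}$, and I write $t=vm$ with $v\in V$ and $m\in\ov{M}$. The relation $st=ts$ forces $sm=ms$ and $sv=v$, while the fact that $t$ is a $q'$-element forces $v\in[V,m]$ (by the coprime action of $\langle m\rangle$ on $V$ one has $V=C_V(m)\oplus[V,m]$, and $t$ has order prime to $q$ precisely when $v\in[V,m]$). A direct computation in $V\rtimes\ov{M}$ shows that $g$ is conjugate into $\ov{M}$ by an element of $V$ if and only if $v$ lies in the image $(1-sm)V$ of the endomorphism $1-sm$ of $V$. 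Thus everything comes down to the linear-algebraic claim that $1-sm$ is invertible on $[V,m]$; this is where I expect the only genuine difficulty to lie, and it is resolved by the interplay of the two coprime parts. On $V$ the $q$-element $s$ acts unipotently and the $q'$-element $m$ acts semisimply, and since $s$ and $m$ commute they preserve the eigenspace decomposition of $m$ over $\ov{\FF}_q$; on the eigenspace of $m$ for an eigenvalue $\lambda\neq1$ (these span $[V,m]$) the operator $1-sm=1-\lambda s$ is an invertible scalar plus a commuting nilpotent, hence invertible. Therefore $v\in[V,m]\subseteq(1-sm)V$, so $g$ is conjugate into $\ov{M}$, completing the reduction and the proof.
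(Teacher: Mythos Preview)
Your proof is correct and takes a genuinely different route from the paper's. The paper proceeds by induction on $|G|$ and uses the crown machinery (Proposition~\ref{reducecrown}) to reduce to the case $G=V^u\rtimes H$ with $H$ acting faithfully and irreducibly on $V$; it then invokes the lifting criterion of Proposition~\ref{ltcase}, and the decisive computation is that if $b$ is a power of $ab$ then $[b,V]\le[ab,V]$, so linear independence of the components of $v$ modulo $[ab,V]$ persists modulo $[b,V]$. You instead argue one maximal subgroup at a time via the standard reformulation of invariable generation: passing to the primitive quotient $G/\Core_G(M)=V\rtimes\bar{M}$, you note that every $q'$-element (hence every $g_i$ with $i\ne i_0$) is automatically conjugate into $\bar{M}$ by Schur--Zassenhaus, and for the remaining generator $g=st$ you show $g=v(sm)$ with $v\in[V,m]$ and prove that $1-sm$ is invertible on $[V,m]$ using the Jordan decomposition of the commuting pair $(s,m)$ over $\ol{\FF}_q$. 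Both proofs exploit solubility at the same point---the primitive quotient has the shape $V\rtimes(\text{complement})$ with $V$ an elementary abelian $q$-group---but your reduction is lighter (no crowns, no $\h(H,V)=0$ hypothesis, no inductive appeal to the theorem for $H$), while the paper's apparatus (Propositions~\ref{matrici} and~\ref{ltcase}) is designed to do more and is reused for Theorems~\ref{sol} and~\ref{super}. Your eigenspace calculation and the paper's containment $[b,V]\le[ab,V]$ are cognate: each expresses that replacing an element by a power cannot enlarge the relevant commutator image in $V$.
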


A key role in the proof of Theorem \ref{CIG=PCIG} is played  by the theory of crowns, introduced by
Gasch\"{u}tz in \cite{Ga}. Using the properties of the crowns,
it can be proved (see Proposition \ref{reducecrown}) that a subset $\{g_1,\dots,g_d\}$
of a finite soluble group $G$ invariably generates $G$  if and only if $\{\phi(g_1),\dots,\phi(g_d)\}$
 invariably generates $\phi(G)$ for any epimomorphism $G\to V^t \rtimes H,$ where $H$ acts in the
same way on each of the $t$ direct factors and the action of $H$ on $V$ is faithful and irreducible.
Thus, a crucial step is to study, for a semidirect product  $V^t \rtimes H,$ under
which conditions a coprime-invariable generating set of $H$ can be lifted to a  coprime-invariable generating set of $V^t \rtimes H.$
Results is this direction are given in Proposition \ref{matrici} and Proposition \ref{ltcase}.
Actually,  Proposition \ref{matrici} deals with the more general case of invariable generation, and might have
some relevance in the study of invariable generation of soluble groups.
For example, using this result, we can improve the bound given in \cite[Theorem 3.1]{ig}
 for the smallest cardinality $d_I(G)$ of an invariably generating set of a finite
group $G$: when $G$ is soluble, the authors of \cite{ig} prove that $d_I(G)\leq a,$
where $a$ is the length of a chief series of $G.$ Our stronger bound is:
\begin{thm}\label{sol}If $G$ is a finite soluble group
then $$d_I(G)\leq \sum_{A\in \mathcal V}\left \lceil  \frac{\delta_G(A)}{\dim_{\End_G(A)}A} \right \rceil$$
where $A$ runs in the set $\mathcal V$ of representatives for the irreducible $G$-groups that are
$G$-isomorphic to a complemented chief factor of $G$ and, for $A\in \mathcal V,$ $\delta_G(A)$
denote the number of complemented factors $G$-isomorphic to $A$ in a chief series of $G.$
\end{thm}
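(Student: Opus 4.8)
The plan is to argue by induction on $|G|$, peeling off one complemented chief factor at a time and letting the ceiling function in the bound record precisely when a new invariable generator is forced. For $A\in\mathcal V$ put $E_A=\End_G(A)$, $n_A=\dim_{E_A}A$ and $c_A=\lceil \delta_G(A)/n_A\rceil$, so that the claimed bound is $\sum_{A\in\mathcal V}c_A$. The base case $G=1$ is the empty sum, and the Frattini case is immediate: if $N$ is a minimal normal subgroup with $N\le \frat(G)$ then invariable generators lift along $G\to G/N$, and removing a non-complemented factor changes neither $\mathcal V$ nor any of the quantities $\delta_G(A)$, $n_A$, so $d_I(G)=d_I(G/N)$ and the inductive bound is unchanged.

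So assume $N$ is a complemented minimal normal subgroup, $N\cong A_0$ for some $A_0\in\mathcal V$. Passing to $G/N$ lowers the count of complemented factors of type $A_0$ by one and leaves all other data fixed, so $\delta_{G/N}(A_0)=\delta_G(A_0)-1$, and by induction $d_I(G/N)\le c_{A_0}'+\sum_{A\ne A_0}c_A$ with $c_{A_0}'=\lceil(\delta_G(A_0)-1)/n_{A_0}\rceil$. I fix an invariable generating set $\overline Y$ of $G/N$ of this size and ask how to lift it to an invariable generating set $Y$ of $G$. By Proposition \ref{reducecrown}, whether $Y$ invariably generates $G$ is detected on the crown-based quotients $V^t\rtimes H$ of $G$. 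For every crown of type $A\ne A_0$ this quotient coincides with the corresponding quotient of $G/N$ (as $N$ lies in the relevant kernel), so those conditions are inherited from $\overline Y$ for any lift. The only genuinely new requirement is that the image of $Y$ invariably generate the $A_0$-crown quotient $L=V^{t}\rtimes H$ with $V\cong A_0$ and $t=\delta_G(A_0)$, while $\overline Y$ already handles $V^{t-1}\rtimes H$ and in particular invariably generates the top group $H$.

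Here Proposition \ref{matrici} (and its specialisation Proposition \ref{ltcase}) enters: it translates invariable generation of $V^{s}\rtimes H$, given an invariable generating set of $H$, into a rank condition on the $V^{s}$-components of the chosen lifts, a single lifted element being able to \emph{cover} up to $n_{A_0}=\dim_{E_{A_0}}V$ of the $s$ copies of $V$. Thus $\overline Y$ certifies that $c_{A_0}'$ of its lifts realise a rank profile covering $t-1$ copies, with total covering-capacity $c_{A_0}'\,n_{A_0}$. If $c_{A_0}'\,n_{A_0}\ge t$ (equivalently $n_{A_0}\nmid(t-1)$, so $c_{A_0}=c_{A_0}'$), Proposition \ref{matrici} lets me re-choose the $A_0$-components of these lifts so that they now cover all $t$ copies, producing a valid lift $Y$ with $|Y|=|\overline Y|$; since the $A_0$-component of a generator can be altered independently of its image in the other crown quotients (the socle modulo $\frat(G)$ being the direct product of the crown socles), the conditions verified for $A\ne A_0$ survive. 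Otherwise $c_{A_0}'\,n_{A_0}=t-1$ (that is $n_{A_0}\mid(t-1)$, so $c_{A_0}=c_{A_0}'+1$), and I adjoin one new element supported in the $A_0$-socle — hence trivial in every other crown quotient — whose $A_0$-component supplies the one missing unit of covering-rank; this yields $|Y|=|\overline Y|+1$. In either case $Y$ invariably generates $G$ and $|Y|=\sum_{A\in\mathcal V}c_A$, completing the induction.

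The main obstacle is the content already isolated in Proposition \ref{matrici}: the faithful translation of ``invariably generate $V^{s}\rtimes H$'' into a linear-algebra covering condition over the division ring $E_A$, together with the fact that a single new generator can absorb up to $n_A$ copies of $V$. Granting that, the remaining delicate point is the bookkeeping across crowns: one must ensure, via Proposition \ref{reducecrown} and the direct-product structure of the crown socles, that modifying or adjoining a generator for the $A_0$-crown leaves invariable generation of all other crown quotients intact, so that the per-crown contributions \emph{add} to the stated sum rather than interacting and forcing a larger, non-additive count.
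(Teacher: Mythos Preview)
There is a genuine gap in the lifting step for the case $c_{A_0}=c_{A_0}'$. You write that ``$\overline Y$ certifies that $c_{A_0}'$ of its lifts realise a rank profile covering $t-1$ copies, with total covering-capacity $c_{A_0}'\,n_{A_0}$,'' and then invoke Proposition~\ref{matrici} to re-choose the $A_0$-components and cover all $t$ copies. But Proposition~\ref{matrici} says that once the images $h_1,\dots,h_m\in H$ of the elements of $\overline Y$ are fixed, lifts $h_iw_i$ invariably generate $V^t\rtimes H$ for \emph{some} $w_i\in V^t$ if and only if $\sum_i\dim_{E_{A_0}}C_V(h_i)\ge t$. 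The inductive hypothesis only gives $\sum_i\dim C_V(h_i)\ge t-1$ (from $\overline Y$ handling $V^{t-1}\rtimes H$); nothing forces this sum to reach $c_{A_0}'n_{A_0}$ or $t$. Re-choosing the $A_0$-components $w_i$ does not alter the $h_i$, so if the particular $\overline Y$ produced by induction has $\sum_i\dim C_V(h_i)=t-1$ exactly, no choice of $w_i$ lifts it --- even though $c_{A_0}=c_{A_0}'$ says your budget allows no extra generator.

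A concrete failure: take $H=\GL(2,2)$, $V=\mathbb F_2^2$ (so $n_V=2$) and $G=V^4\rtimes H$. Here $t=4$ and $c_V=c_V'=\lceil 3/2\rceil=2$, so you are in the ``no new generator'' case. The bound for $G/N=V^3\rtimes H$ is $4$, and there exist invariable generating sets $\overline Y$ of $V^3\rtimes H$ of size $4$ whose images in $H$ have orders $2,3,2,2$; then $\sum_i\dim C_V(h_i)=1+0+1+1=3=t-1$, and no adjustment of the $V^4$-components turns this $\overline Y$ into an invariable generating set of $G$.

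The paper avoids this by peeling off the \emph{entire} crown socle $U\cong V^{\delta}$ in one step (Lemmas~\ref{corona} and~\ref{sotto}) rather than a single minimal normal subgroup. Then $\delta_{G/U}(A_0)=0$; one takes $\beta=\eta(G/U)$ invariable generators of $G/U$ by induction and pads with $\alpha=c_{A_0}$ copies of the identity. Since $\dim C_V(1)=n_{A_0}$, these padded generators alone contribute $c_{A_0}\,n_{A_0}\ge\delta$ to the sum in Proposition~\ref{matrici}, independently of whatever $h_i$ the first $\beta$ generators carry. That is exactly the control your one-factor-at-a-time induction lacks: you inherit the $h_i$ from $\overline Y$ and have no mechanism to improve them.
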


Using Proposition \ref{ltcase},
 as a byproduct we obtain a characterization of the finite supersoluble CIG groups.

\begin{thm}\label{super}Let $G$ be a finite supersoluble group.  Then $G$ is coprimely-invariably generated
if and only if no chief series of $G$ contains two complemented and $G$-isomorphic chief factors.
\end{thm}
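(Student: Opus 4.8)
The plan is to translate the combinatorial hypothesis into the language of crowns and then to apply the reduction of Proposition~\ref{reducecrown}. Since $G$ is supersoluble, every chief factor is cyclic of prime order, so each irreducible $G$-group $A$ arising as a chief factor is one-dimensional over $\FF_p$ and satisfies $\End_G(A)=\FF_p$, the action of $G$ on $A$ being given by a character $G\to\FF_p^\ast$. In this setting the hypothesis that no chief series of $G$ contains two complemented $G$-isomorphic chief factors is precisely the statement that $\delta_G(A)\le 1$ for every complemented chief factor $A$. Throughout, each crown contributes a section of the form $V^t\rtimes H$ in the sense of Proposition~\ref{reducecrown}, with $V\cong A$ of order $p$ and $H$ acting faithfully and irreducibly, hence by an injective character $\chi\colon H\to\FF_p^\ast$.

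For the forward implication I argue by contraposition. Suppose some complemented chief factor $A$ has $\delta_G(A)\ge 2$. By the crown machinery there is an epimorphism $\phi\colon G\to V^2\rtimes H$ of the type handled in Proposition~\ref{reducecrown}, with $V$ of order $p$ and $H$ acting diagonally via $\chi$. I claim $V^2\rtimes H$ is not CIG. Let $X$ be any set of elements of pairwise coprime order. Every element lying in the normal subgroup $V^2$ has order $p$, so by coprimality at most one member of $X$ lies in $V^2$. Any member $g=(v,h)$ with $h\ne 1$ acts on $V^2$ by the scalar $\chi(h)$, and since $\chi$ is injective we have $\chi(h)\ne 1$, so $h-1$ acts as the invertible scalar $\chi(h)-1$ on $V^2$; solving $(\chi(h)-1)w=-v$ shows that $g$ is conjugate, by the element $(w,1)\in V^2$, to $(0,h)\in H$. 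Choosing such conjugates for all members of $X$ with nontrivial image in $H$, the resulting conjugate set lies in $\langle H, v\rangle$ for at most one $v\in V^2$, and $\langle H,v\rangle$ meets $V^2$ in the one-dimensional $H$-submodule generated by $v$; hence it cannot generate the group. Thus $X$ does not invariably generate $V^2\rtimes H$, so $V^2\rtimes H$ is not CIG. Since the images under $\phi$ of a coprime invariable generating set of $G$ would be a coprime invariable generating set of $V^2\rtimes H$ (orders divide, and $\phi(X)$ invariably generates by Proposition~\ref{reducecrown}), $G$ is not CIG.

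For the converse I argue by induction on $|G|$, the base case $G=1$ being trivial. Let $N$ be a minimal normal subgroup of $G$; by supersolubility $|N|=p$ for some prime $p$. The quotient $G/N$ is supersoluble and inherits the hypothesis, since its complemented chief factors occur among those of $G$, so by induction $G/N$ admits a set of invariable generators of pairwise coprime order. It remains to lift this set through $N$. If $N\le\frat(G)$, then $N$ is a non-complemented chief factor and a set invariably generates $G$ as soon as its image invariably generates $G/N$; one lifts each generator so that only the unique member whose order is already divisible by $p$ (if any) absorbs the prime $p$, keeping the orders pairwise coprime. If instead $N$ is complemented, it sits at the bottom of the crown associated to a complemented $A$ with $\delta_G(A)=1$, so the relevant section is $V\rtimes H$ with $t=1$, and Proposition~\ref{ltcase} lifts the coprime invariable generating set across this single chief factor, hence through $N$.

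The main obstacle is the bookkeeping of the orders during the lifting: invariable generation across a complemented factor forces one to spend a generator of the appropriate prime-power order on that factor, and the pairwise-coprimality constraint then permits at most one complemented factor of each $G$-isomorphism type to be handled. This is exactly why the condition $\delta_G(A)\le 1$ is both necessary and sufficient, and the precise lifting statement that makes the induction go through is Proposition~\ref{ltcase}; the role of the crown reduction is to guarantee that verifying coprime invariable generation on each section $V^t\rtimes H$ suffices to verify it on $G$.
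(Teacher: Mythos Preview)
Your forward direction is correct and essentially reproves the special case of Theorem~\ref{abcase} that is needed; the paper simply cites that theorem, so your argument is a bit more self-contained but follows the same line.

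Your converse, however, has a genuine gap in the complemented case. You write that ``Proposition~\ref{ltcase} lifts the coprime invariable generating set across this single chief factor,'' but that proposition is stated for $V\rtimes H$ with $H$ acting \emph{faithfully}. When $N$ is a complemented minimal normal subgroup of $G$, the complement $K\cong G/N$ almost never acts faithfully on $N$, so you cannot apply the proposition to $G=N\rtimes K$ directly. If instead you mean to apply it to the crown quotient $G/R_G(N)\cong N\rtimes H$ with $H=G/C_G(N)$, then two further points must be checked and you do not address them. First, you need the modification $h_1\mapsto h_1v$ not to disturb invariable generation modulo the other $R_G(A)$; this amounts to showing $N\le R_G(A)$ for every complemented $A\not\cong_G N$, which is true but requires an argument. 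Second, and more seriously, Proposition~\ref{ltcase} with $u=1$ and $\dim V=1$ only yields a lift when the image of $h_1$ in $H$ is trivial, i.e.\ when $h_1$ centralizes $N$; otherwise $[h_1,V]=V$ and no choice of $v$ works. Since $H$ has order dividing $p-1$, every $p$-element of $K$ centralizes $N$, but your $h_1$ is merely the generator whose order is divisible by $p$, and its $p'$-part may well act nontrivially on $N$. You would need to first pass to a PCIG-set of $K$ (for instance via Theorem~\ref{CIG=PCIG}) to force $h_1$ to have $p$-power order, and you do not do so.

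The paper sidesteps all of this by inducting not on a single minimal normal subgroup but on the largest prime $p$ dividing $|G|$: the Sylow $p$-subgroup $P$ is normal, its Frattini quotient $P/\frat(P)$ is a direct product of pairwise non-$G$-isomorphic one-dimensional modules (by the hypothesis $\delta_G\le 1$), hence is cyclic as an $H$-module for the $p$-complement $H$; one then picks a single $p$-element $x$ with $\langle x\rangle^H=P$ and adjoins it to an inductive CIG-set of $H$. This produces the new generator already inside $P$, so it is automatically a $p$-element commuting with the rest of the construction, and the coprimality bookkeeping is immediate.
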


A motivation for our interest in  PCIG groups
is due to the fact a group
 without proper subgroups with the same exponent (we will call
these groups {\sl{minimal-exponent groups}}) is actually a PCIG group. Indeed assume that
$G$ is a minimal-exponent group with $e\!\!:=\exp(G)=p_1^{n_1}\cdots p_t^{n_t}$. Then for every $i,$ the group
$G$ contains an element $g_i$ of order $p_i^{n_i}$. Clearly $\exp \langle g_1^{x_1},\dots,g_t^{x_t} \rangle=e$,
for every $x_1,\dots,x_t\in G$, so $G$ is a PCIG group. On the other hand, there are PCIG groups
 which are not minimal-exponent.
 An  easy  example  is given by the semidirect product  $G$ of two copies $C_3 \times C_3$ of  a cyclic group of order $3$  and a cyclic group $\langle y \rangle$ of order $2$, acting trivially on the first component of $C_3 \times C_3$ and non-trivially on the second one. This group $G$ can be invariably  generated by $y$ and a non-trivial diagonal element $(x,x)$ in $C_3\times C_3.$ So $G$ is PCIG, but has a proper (cyclic) subgroup, $\langle (x,1)y \rangle$, with the same exponent.

\

In \cite{CIG} the authors prove that if $G$ is a CIG group, then the minimal size $d(G)$ of a generating set is at most $ 3$, with $d(G) \leq 2$ whenever $G$ is soluble.
In particular, the same results hold for PCIG and minimal-exponent groups. As it is noticed in
\cite{CIG} the bound $d(G)\le 3$ is sharp for both PCIG and CIG group. But it remains an open  and difficult question
to decide whether there exists a minimal-exponent group which is not 2-generated. We think that, in order to collect more information on this problem, it could be useful to investigate in more generality how far is the property of being minimal-exponent from the PCIG property.
A substantial difference between the two properties is that only the second one is inhered by the epimorphic images, while
an epimorphic image of a minimal-exponent group satisfies PCIG but is not in general minimal-exponent.
Really we do not know examples of a PCIG group that does not appear as an epimorphic image of a
minimal-exponent group and we propose the following question: is it true that for every  finite PCIG group $X$ there exists a minimal-exponent finite group $Y,$ and a normal subgroup $N$ of $Y,$ such that $X$ is isomorphic to $Y/N?$

  The structure of the paper is the following. In Section \ref{Soluble} we investigate under which conditions a set of invariable generators of an epimorphic image can be lifted to the whole group; then we construct a CIG-group that is not PCIG and prove Theorem \ref{CIG=PCIG}.
  Theorem \ref{sol} is proved in Section \ref{invariable}.
  In Section \ref{Supersoluble} we prove Theorem \ref{super} and we analyse the structure of supersoluble minimal-exponent and CIG-groups.
 Finally, in  Section \ref{quest} we prove that all the finite simple groups are PCIG and
% \ref{min-exponent}
%   we discuss further differences among the three properties CIG, PCIG and being minimal-exponent
% Finally, in Section \ref{quest}
 we present some further
questions and  examples.

%%%%%%%%%%%%%%%%%%% SOLUBLE %%%%%%%%%%%%%%%%%%%%%%%%%%

\section{CIG and PCIG groups}\label{Soluble}

As noted above, a PCIG group is a CIG group.
In this section we prove that there exists a CIG group which is not PCIG, but the two properties coincide on the class of soluble groups.

We start with some definitions and notations. In the following all groups are finite.
A subset $X=\{g_1, \ldots , g_d\}$ of $G$ is an invariable generating set if
 $\{g_1^{x_1}, \ldots , g_d^{x_d}\}$ generates $G$ for every choice of $x_i \in G$.
For brevity, we say that  the subset $X=\{g_1, \ldots , g_d\}$ is a
CIG-set of $G$
 if $X$ invariably generates $G$ and the elements $g_1, \ldots ,g_d$ have coprime orders;
similarly, we say that a  subset $X=\{g_1, \ldots , g_d\}$ of $G$ is a PCIG-set of $G$
if $X$ is a CIG-set of $G$ and the orders $|g_1|, \ldots , |g_d|$ are prime-powers.

\

\noindent{\bf Notation:}
 Let $H$ be a  group acting
irreducibly and faithfully on an elementary abelian  $p$-group $V$.
For a positive integer $u$ we consider
 the semidirect product $G = V^u \rtimes H$: unless otherwise stated, we assume that  the action of $H$ is diagonal on $V^u$, that is, $H$ acts in the
same way on each of the $u$ direct factors.

\begin{lemma}\label{inizio}
The group $G=V^u \rtimes H$
% \comment{taglio:, where $H$ acts irreducibly and faithfully on an elementary abelian  $p$-group $V$. Then $G$}
 is comprimely-invariably generated if and only if there exist a CIG-set $\{h_1,\dots,h_d\}\in H$ and an element $v \in V^u$ such that
\begin{enumerate}
\item  $(|h_i|,p)$ if $i\neq 1.$
\item $\{h_1v,h_2,\dots,h_d\}$ is a CIG-set for $G$.
\end{enumerate}
\end{lemma}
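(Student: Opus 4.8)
The reverse implication is immediate: condition~(2) simply exhibits a CIG-set of $G$, so $G$ is coprimely-invariably generated. The content is therefore the forward implication, and I would open by recording two standard stability properties of invariable generation that get used throughout. First, if $N\trianglelefteq G$ and $\pi\colon G\to G/N$ is the quotient map, then $\pi$ carries an invariable generating set to an invariable generating set of $G/N$, since every conjugate $\pi(g_i)^{y}$ is the image of a conjugate $g_i^{\tilde y}$ for a lift $\tilde y$ of $y$. Second, replacing each element $g_i$ of an invariable generating set by an arbitrary conjugate $g_i^{x_i}$ again yields an invariable generating set with the same orders, because $\{(g_i^{x_i})^{z_i}\}=\{g_i^{x_iz_i}\}$ generates $G$ for all choices of $z_i$; in particular the CIG-set property is preserved under conjugating individual elements.

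Now assume $G=V^u\rtimes H$ is CIG and fix a CIG-set $X=\{g_1,\dots,g_d\}$. Projecting along $\pi\colon G\to H=G/V^u$, the orders $|\pi(g_i)|$ divide the pairwise coprime $|g_i|$, so $\{\pi(g_1),\dots,\pi(g_d)\}$ invariably generates $H$ with coprime orders. Next I would argue that exactly one $g_i$ has order divisible by $p$: at most one does by coprimality, and if none did, then every $g_i$ would be a $p'$-element, so $V^u$ would be a normal (abelian, hence solvable) Sylow $p$-subgroup of each $V^u\langle g_i\rangle$ with $p'$-complement $\langle g_i\rangle$; Schur--Zassenhaus would then let me conjugate each $g_i$ into $H$, producing an invariable generating set contained in the proper subgroup $H<G$, which is impossible. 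Relabelling so that $p\mid |g_1|$ and $(|g_i|,p)=1$ for $i\neq 1$ is exactly what yields condition~(1).

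For each $i\neq 1$, the element $g_i$ is a $p'$-element and $V^u\langle g_i\rangle=\pi^{-1}(\langle\pi(g_i)\rangle)$ contains both $\langle g_i\rangle$ and $\langle\pi(g_i)\rangle$ as complements to its normal Sylow $p$-subgroup $V^u$. By conjugacy of complements there is $t_i\in V^u\langle g_i\rangle$ with $\langle g_i\rangle^{t_i}=\langle\pi(g_i)\rangle$; since $\pi(t_i)$ lies in the abelian group $\langle\pi(g_i)\rangle$ and thus centralizes $\pi(g_i)$, one gets $\pi(g_i^{t_i})=\pi(g_i)$, and as $g_i^{t_i}\in H$ this forces $g_i^{t_i}=\pi(g_i)=:h_i\in H$. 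Conjugating by $x_1=1$ and $x_i=t_i$, the set $\{g_1,h_2,\dots,h_d\}$ is still a CIG-set of $G$. Setting $h_1:=\pi(g_1)$ and $v:=h_1^{-1}g_1\in V^u$ gives $g_1=h_1v$, so that $\{h_1,\dots,h_d\}=\pi(\{g_1,h_2,\dots,h_d\})$ is a CIG-set of $H$ meeting~(1), while $\{h_1v,h_2,\dots,h_d\}=\{g_1,h_2,\dots,h_d\}$ is a CIG-set of $G$, giving~(2).

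The only delicate point — the mild obstacle — is the passage from ``$g_i$ is conjugate into $\langle\pi(g_i)\rangle$'' to ``$g_i$ is conjugate exactly to $\pi(g_i)$''; this is precisely where the abelianity of $\langle\pi(g_i)\rangle$ (so that $\pi(t_i)$ fixes $\pi(g_i)$) is needed. Everything else is bookkeeping with the two stability properties and with Schur--Zassenhaus applied inside the $p$-local subgroups $V^u\langle g_i\rangle$.
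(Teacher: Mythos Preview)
Your proof is correct and follows essentially the same approach as the paper: for $i\neq 1$ use Schur--Zassenhaus inside $V^u\langle g_i\rangle$ to conjugate the $p'$-element $g_i$ into $H$, then project to $H$ to obtain the CIG-set $\{h_1,\dots,h_d\}$. The paper is terser (it only asserts that $g_i$ is conjugate to \emph{some} element of $\langle x_i\rangle$ and leaves the rest as ``not difficult to see''), whereas you add the nice observation that the conjugate must in fact equal $\pi(g_i)$ via the abelianity of $\langle\pi(g_i)\rangle$, and you also note that exactly one $g_i$ has order divisible by $p$; these are refinements rather than a different strategy.
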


\begin{proof}Assume that $\{g_1,\dots,g_d\}$ is a CIG-set for $G=V^u\rtimes H$: thus $\{g_1,\dots,g_d\}$
 invariably generates $G$ and  $|g_1|, \ldots , |g_d|$ are coprime, in particular
  we can assume that $|g_i|$ is prime to $p$ for every $i \neq 1$.
For every $i$, we write $g_i=x_iv_i$ where $v_i \in V$ and $x_i \in H$. If $i \neq 1$,
  then $g_i$ is conjugate to an element $h_i$ of $\langle x_i \rangle$, since  $\langle x_i \rangle$ is a Hall $p'$-subgroup of $V \langle x_i \rangle$
 and $g_i$ is a $p^\prime$-element.
As  $\{g_1,\dots,g_d\}$   invariably generates $G,$ the set  $\{x_1v_1,h_2,\dots,h_d\}$ is a CIG-set for $G$.
Then, it is not difficult to see that $\{x_1, h_2, \dots,h_d\}$
 must be a CIG-set for $H.$
 \end{proof}

\begin{thm}\label{abcase}
Let $G=V^u \rtimes H$, where $H$ acts irreducibly and faithfully on an elementary abelian  $p$-group $V$ and let $F=\End_H(V).$
If $G$ is coprimely-invariably generated, then $$u\leq \max_{h\in \Lambda}\dim_FC_V(h)$$
where $\Lambda$ is the set of
elements $h$ in $H$ contained in some CIG-set $\{h,h_2,\dots,h_d\}$ of $H$  with  $|h_i|$ prime to $p$ for every $i\in \{2,\dots,d\}$.
\end{thm}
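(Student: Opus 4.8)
The plan is to apply Lemma~\ref{inizio}. Since $G$ is coprimely-invariably generated, that lemma produces a CIG-set $\{h_1,\dots,h_d\}$ of $H$ with $(|h_i|,p)=1$ for $i\neq 1$, together with an element $v\in V^u$, such that $\{h_1v,h_2,\dots,h_d\}$ is a CIG-set for $G$. In particular $h_1\in\Lambda$, so it suffices to prove that $u\le \dim_F C_V(h_1)$. Writing $W=V^u$, I would regard $W$ as an $\mathbb{F}_pH$-module on which $H$ acts diagonally, and exploit that invariable generation is inherited by epimorphic images: for every surjective $H$-homomorphism $\phi\colon W\to V$, the kernel $W_0=\ker\phi$ is a maximal $H$-submodule with $W/W_0\cong V$, and the induced set $\{h_1\phi(v),h_2,\dots,h_d\}$ must invariably generate $G/W_0\cong V\rtimes H$.

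The key step is a conjugacy criterion inside $V\rtimes H$. A direct computation in the semidirect product shows that an element $h_1w$ with $w\in V$ is conjugate into the complement $H$ if and only if $w\in[V,h_1]=(h_1-1)V$. Now the elements $h_2,\dots,h_d$ already lie in $H$, so if $\phi(v)\in[V,h_1]$ for some \emph{nonzero} $\phi\in\operatorname{Hom}_H(W,V)$, then in $G/W_0\cong V\rtimes H$ all of $h_1\phi(v),h_2,\dots,h_d$ can be simultaneously conjugated into the proper subgroup $H$; lifting these conjugating elements back to $G$ forces the corresponding conjugates of $h_1v,h_2,\dots,h_d$ into the preimage $W_0\rtimes H\subsetneq G$, contradicting that $\{h_1v,h_2,\dots,h_d\}$ invariably generates $G$. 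Hence $\phi(v)\notin[V,h_1]$ for every nonzero $\phi\in\operatorname{Hom}_H(W,V)$.

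Finally, I would translate this into a dimension count. Since $h_1$ commutes with $F=\End_H(V)$, the subspace $[V,h_1]$ is $F$-stable, so the evaluation map
\[
\mathrm{ev}\colon \operatorname{Hom}_H(W,V)\longrightarrow V/[V,h_1],\qquad \phi\mapsto \phi(v)+[V,h_1],
\]
is $F$-linear, and the previous paragraph says exactly that $\ker\mathrm{ev}=0$. Because $H$ acts diagonally, $W=V^{\oplus u}$ and $\operatorname{Hom}_H(W,V)\cong\End_H(V)^u=F^u$ has $F$-dimension $u$, while rank-nullity applied to $h_1-1$ gives $\dim_F V/[V,h_1]=\dim_F C_V(h_1)$. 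Injectivity of $\mathrm{ev}$ then yields $u\le\dim_F C_V(h_1)\le\max_{h\in\Lambda}\dim_F C_V(h)$, as required. The main obstacle is the conjugacy criterion together with the lifting argument: one must verify carefully that a failure of invariable generation in the quotient $V\rtimes H$ genuinely lifts to a proper trapping subgroup of $G$, and keep track of the $F$-linearity so that the final count over $F=\End_H(V)$ is legitimate; beyond supplying, via Lemma~\ref{inizio}, the single element $h_1$ that carries the whole obstruction, the coprimality of the $|h_i|$ plays no further role here.
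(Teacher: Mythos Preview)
Your proof is correct and is essentially the same as the paper's, just repackaged in module-theoretic language. The paper conjugates $h_1v$ by arbitrary $w\in V^u$ and observes that $[h_1,w]+v$ must then generate $V^u$ as an $H$-module, which in coordinates means the $v_i+[h_1,w_i]$ are $F$-linearly independent for all choices of $w_i$ and hence the $v_i$ are independent modulo $[h_1,V]$; your version replaces ``generates $V^u$'' by its dual ``not killed by any nonzero $\phi\in\operatorname{Hom}_H(V^u,V)$'' and replaces the coordinate linear-independence by injectivity of the evaluation map $F^u\to V/[h_1,V]$, which is the same statement.
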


\begin{proof}
If $G$  is coprimely-invariably generated, then there exist $h_1,\dots,h_d$ and $v$ as in the statement of Lemma \ref{inizio}.
 Let $h=h_1$ and choose $w \in V$. Then $(hv)^w=h^wv=h[h,w]v$, where $ [h,w]v \in V$.
Since
 \[ G= \langle  (hv)^w, h_2,\dots,h_d \rangle \leq \langle  [h,w]v, h, h_2,\dots,h_d \rangle
 \leq \langle [h,w]v \rangle ^H H \]
 we get that $ \langle [h,w]v \rangle ^H=V^u$, that is,
  $[h,w]v$ is a cyclic generator for the $H$-module $V^u$.
  Let $v=(v_1, \ldots ,v_u)$ and $w=(w_1, \ldots  , w_u)$. Switching to additive notation,  the fact that $v+[h,w]$ is a  generator for the $H$-module $V^u$ implies
  that
 the elements $v_1+[h,w_1], v_2+[h,w_2], \ldots , v_u+[h,w_u]$ are linearly independent in the  $F$-vector space $V$.
 Let $\alpha_1,\dots,\alpha_u \in F.$
 Note that
 \[ \sum_{i=1}^{u} \alpha_i (v_i+[h,w_i])=0 \]
 if and only if
$$ \sum_{i=1}^{u} \alpha_i v_i \in \left[h,\sum_{i=1}^{u} \alpha_i w_i\right].$$
% and actually if and only if  $\sum_{i=1}^{u} \alpha_i v_i \in [h,V],$
Since this condition  holds for every choice of  $w \in V^u$,
  we deduce that
  $ v_1, \ldots ,v_u$ have to be linearly independent modulo the subspace $[h,V]$, hence
  $$u \leq \dim_{F}V -\dim_{F}[h,V]=\dim_{F}C_V(h).$$
 In particular $u\leq \max_{h\in \Lambda}\dim_{F}C_V(h)$, as required.
\end{proof}

As an application of the previous result, we now prove that the properties CIG and PCIG are not equivalent.

\begin{prop}\label{s_7}
There exists a coprimely-invariably generated group which is not PCIG.
\end{prop}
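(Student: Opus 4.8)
The plan is to produce the example as a semidirect product $G=V^u\rtimes H$ of the kind analysed above, with $H$ \emph{non-soluble} (so that Theorem \ref{CIG=PCIG} does not already force $G$ to be PCIG). Concretely I would take $H=\psl(2,11)$ and let $V$ be the $3$-dimensional absolutely irreducible $\FF_{11}H$-module, namely the symmetric square of the natural $\ssl(2,11)$-module (on which the centre acts trivially); thus $p=11$, $F=\End_H(V)=\FF_{11}$ and $\dim_F V=3$. The decisive feature is the subgroup structure of $H$: its maximal subgroups are the Borel subgroup $11{:}5$, a dihedral group $D_{12}$ and two classes of $A_5$, and its element orders are $1,2,3,5,6,11$. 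Every element of order $2,3$ or $5$ is conjugate into each copy of $A_5$ (one checks that a single $A_5$ meets all the classes $2A,3A,5A,5B$), so among the elements of order prime to $11$ the copies of $A_5$ are avoided only by the elements of \emph{composite} order $6$; the sole remaining elements avoiding $A_5$ have order $11$.

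To see that $G$ is coprimely-invariably generated I would first note that $\{g_5,g_6\}$, with $g_5,g_6\in H$ of orders $5$ and $6$, invariably generates $H$: for each maximal subgroup one of the two escapes, since $g_6$ avoids both copies of $A_5$ and the Borel while $g_5$ avoids $D_{12}$. Choosing $u\le\dim_F V=3$ I can pick $v\in V^u$ generating $V^u$ as an $H$-module (possible exactly because $u\le\dim_F V$). The set $\{v,g_5,g_6\}$ then has pairwise coprime orders $11,5,6$, and it invariably generates $G$: the image $\{g_5,g_6\}$ invariably generates the quotient $H$, and since every $G$-conjugate of $v$ is still a module generator of $V^u$, the normal-closure argument shows $\langle v^{x_0},g_5^{x_5},g_6^{x_6}\rangle=G$ for all $x_i$. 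Thus $G$ is CIG for each $u\le 3$, and I would fix $u=2$.

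It remains to show that $G$ is not PCIG. Suppose $\{g_1,\dots,g_d\}$ were a PCIG-set. Its image in $H=G/V^u$ invariably generates $H$ with pairwise coprime prime-power orders, so in order to avoid a copy of $A_5$ some image must have order $6$ or $11$; being a prime power it has order $11$. By coprimality there is exactly one such generator, say $g_1$, and since the $V^u$-part of $G$ must be covered we may write $g_1=h_1v'$ with $h_1\in H$ of order $11$ and $v'\in V^u$; thus $h_1$ is a nontrivial unipotent element. Conjugating the remaining ($11'$-)generators into $H$ and letting the conjugating element of $g_1$ range over $V^u$, the computation in the proof of Theorem \ref{abcase} applies verbatim to $h_1$ and gives $u\le\dim_F C_V(h_1)$. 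But a nontrivial unipotent acts on the $3$-dimensional module as a single Jordan block, so $\dim_F C_V(h_1)=1<2=u$, a contradiction. Hence $G$ has no PCIG-set.

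The hard part is the subgroup-theoretic input of the first paragraph: one must verify that every prime-power $11'$-element of $\psl(2,11)$ is conjugate into a copy of $A_5$, i.e. that a single $A_5$ meets all four classes $2A,3A,5A,5B$ (in particular that the two $\psl(2,11)$-classes of elements of order $5$ both occur inside one $A_5$). This fusion statement is precisely what makes the composite order $6$ indispensable among $11'$-elements and thereby separates the value $\dim_F C_V(1)=3$, available to a CIG-set, from the value $1$ to which a PCIG-set is confined once its distinguished generator is forced to be unipotent.
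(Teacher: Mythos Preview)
Your argument is correct and follows exactly the paper's strategy: build $G=V^u\rtimes H$, rule out PCIG via the centralizer bound from (the proof of) Theorem~\ref{abcase} applied to the forced $p$-element, and establish CIG by exhibiting a coprime set whose distinguished generator lies in $V^u$ (so that the relevant centralizer is all of $V$). The only difference is the concrete data: you take $H=\psl(2,11)$ with its $3$-dimensional $\FF_{11}$-module and $u=2$, while the paper takes $H=\sym(7)$ with the $5$-dimensional deleted permutation module over $\FF_7$ and $u=5$; in both cases a composite-order element ($6$, respectively $12$) is what lets the CIG-set escape the obstructing maximal subgroup ($A_5$, respectively a point stabiliser) that no prime-power $p'$-element can avoid. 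Your example is smaller, and the fusion check you single out is immediate: since $g\not\sim g^2$ for $|g|=5$ in $\psl(2,11)$, the two $A_5$-classes of $5$-cycles must land in the two distinct $H$-classes $5A,5B$, so a single $A_5$ meets both.
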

\begin{proof}
Note that $H=\sym(7)$ admits an absolute irreducible module $V$ of order
$7^5$ (the full deleted permutation module).
We use Theorem \ref{abcase} to deduce that
$G=V^5\rtimes H$ is not PCIG. Indeed a set of elements of $H$ of prime-power order that invariably generates $H$ must contains
an element $x$ of order 7 (since all the other elements of $H$ are contained in a point-stabilizer)
and $\dim_{F_7}C_V(x)\leq 4$, since the action is faithful.
Thus it follows from  Theorem \ref{abcase} that   $G$  is not PCIG.

However, if $a_1,\dots,a_5$ are $F_7$-linearly independent elements of
$V,$ then $z=(a_1,\dots,a_5)$ generates $V^5$ as an $H$-module and $\{z,(1,2,3,4,5), (1,2,3)(4,5,6,7)\}$
is a CIG-set of $G.$ Thus  $G$ is a (non-soluble)  CIG group.
Note that in this case the trivial element $e$ is contained in the CIG-set $\{e,  (1,2,3,4,5), (1,2,3)(4,5,6,7)\}$ of $H$, and $ \dim_{F_7}C_V(e)=5$, so that the condition of  Theorem \ref{abcase} is satisfied.
\end{proof}

Theorem \ref{abcase} gives a necessary condition on $u$ in order to ensure that
the semidirect product $G=V^u\rtimes H$ is a CIG group given that the  irreducible
linear group $H$ is CIG. We are going to prove that this is indeed also
a sufficient condition whenever $\h(H,V)=0.$

\

For the convenience of the reader, we provide the proof of the following key result:

\begin{prop}\label{crit}\cite[Proposition 2.1]{dw}. Suppose $\h(H,V)=0$ and  $H=\langle h_1, \dots, h_d\rangle$.
 Let $w_i=(w_{i,1},\dots,w_{i,u})\in V^u$ with $1\leq i\leq d.$ The following are equivalent.
\begin{enumerate}
\item $G \neq \langle h_1w_1,\dots,h_dw_d\rangle$;
\item there exist $\lambda_1,\dots,\lambda_u \in F=\mathrm{End}_{H}(V)$ and $w \in V$
with $(\lambda_1,\dots,\lambda_u, w) \not= (0, \ldots,0,0)$ such
that $\sum_{1\leq j\leq u} \lambda_j w_{i,j} = [h_i,w]$ for each $i\in\{1,\dots,d\}.$
\end{enumerate}
\end{prop}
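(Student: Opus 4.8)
The plan is to analyze the subgroup $K=\langle h_1w_1,\dots,h_dw_d\rangle$ directly, writing $M=V^u$ for the normal subgroup. Since $H=\langle h_1,\dots,h_d\rangle$, the generators $h_iw_i$ map onto generators of $H$ in $G/M$, so $KM=G$; consequently $K=G$ if and only if $M\le K$, i.e. $K\cap M=M$. The first point I would record is that $K\cap M$ is an $H$-submodule of $M$: for $x\in K\cap M$ and $h\in H$, pick $k\in K$ projecting to $h$ and write $k=hm$ with $m\in M$; since $M$ is abelian, conjugation of $x$ by $k$ agrees with the $H$-action of $h$. Hence $K\ne G$ if and only if $K\cap M$ is contained in some maximal $H$-submodule of $M$. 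Because $V$ is irreducible with $F=\End_H(V)$, the $H$-submodules of $V^u$ correspond to the $F$-subspaces of $F^u$, and the maximal ones are exactly the kernels of the nonzero homomorphisms $\phi\in\mathrm{Hom}_H(V^u,V)$; identifying $\phi$ with $(\lambda_1,\dots,\lambda_u)\in F^u$ gives $\phi(x_1,\dots,x_u)=\sum_j\lambda_jx_j$.

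Next I would fix a nonzero $\phi$ and pass to the quotient $\bar G=G/\ker\phi$. As $\phi\ne 0$ and $V$ is irreducible, $M/\ker\phi\cong V$ as an $H$-module, so $\bar G\cong V\rtimes H$, and the generators of $K$ map to $h_i\phi(w_i)$. A Dedekind-identity computation shows $\bar K\cap(M/\ker\phi)=0$ precisely when $K\cap M\le\ker\phi$; since $M/\ker\phi$ is irreducible and $\bar K$ surjects onto $H$, this says $\bar K$ is either all of $\bar G$ or a complement to $V$ in $\bar G$. Thus $K\ne G$ if and only if there is a nonzero $\phi$ for which $\bar K$ is a proper subgroup.

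The cohomological input enters here. Complements to $V$ in $V\rtimes H$ are parametrized by the cocycles in $Z^1(H,V)$, a complement containing each $h_i\phi(w_i)$ being one whose defining cocycle $f$ satisfies $f(h_i)=\phi(w_i)$; so $\bar K$ is proper exactly when some $f\in Z^1(H,V)$ has $f(h_i)=\phi(w_i)$ for all $i$. Invoking $\h(H,V)=0$, every cocycle is a coboundary $f(h)=[h,w]$ for some $w\in V$, and the condition becomes $\phi(w_i)=[h_i,w]$, that is $\sum_j\lambda_jw_{i,j}=[h_i,w]$ for every $i$. Combining over all nonzero $\phi$ yields the equivalence of (1) and (2), up to the normalization of the ``nonzero'' clause.

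To match condition (2) exactly I would finally check that a solution with $(\lambda_1,\dots,\lambda_u,w)\ne 0$ exists if and only if one with $\phi=(\lambda_1,\dots,\lambda_u)\ne 0$ exists: if $\phi=0$ then the equations force $[h_i,w]=0$ for all $i$, whence $w\in C_V(H)=0$ by faithful irreducibility, so any genuine solution automatically has $\phi\ne 0$. The step I expect to be most delicate is the clean identification of the proper supplements $\bar K$ with cocycles and the precise use of $\h(H,V)=0$ to replace ``cocycle'' by ``coboundary''; the submodule and quotient bookkeeping (that $K\cap M$ is an $H$-submodule, and that the relevant quotient of $V^u$ is $H$-isomorphic to $V$) is routine but must be tracked carefully so that condition (2) reads off exactly as stated.
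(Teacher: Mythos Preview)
Your argument is correct and takes a genuinely different route from the paper's proof.

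The paper proves $(1)\Rightarrow(2)$ by induction on $u$: it peels off the last coordinate, and in the crucial case where $Z=\langle h_1(w_{1,1},\dots,w_{1,u-1},0),\dots\rangle\cong V^{u-1}\rtimes H$ it invokes the Aschbacher--Guralnick description $H^1(Z,V)\cong F^{u-1}$ to decompose the derivation $\delta(z_i)=w_{i,u}$ and read off $\lambda_1,\dots,\lambda_{u-1}$ and $w$. The direction $(2)\Rightarrow(1)$ is handled by exhibiting the proper subgroup $\{h(v_1,\dots,v_u)\mid [h,w]=\sum_j\lambda_j v_j\}$.

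Your approach avoids the induction and the external $H^1$ computation entirely: you observe that $K\cap V^u$ is an $H$-submodule, pass to a simple quotient via a nonzero $\phi\in\mathrm{Hom}_H(V^u,V)\cong F^u$, and then use $H^1(H,V)=0$ just once to identify the complement $\bar K$ in $V\rtimes H$ with a coboundary $h\mapsto[h,w]$. This is more elementary and makes the single point of contact with the cohomological hypothesis completely transparent; the paper's inductive argument, by contrast, shows concretely how the coefficients $\lambda_j$ accumulate layer by layer and connects the result to the structure of $H^1$ for the larger group $V^{u-1}\rtimes H$. Your $(2)\Rightarrow(1)$ is essentially the same as the paper's: the subgroup the paper writes down is exactly the preimage of your complement $\{h\,[h,w]:h\in H\}$ under the quotient by $\ker\phi$.

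One small remark: your closing step, that a solution with $(\lambda_1,\dots,\lambda_u)=0$ forces $w\in C_V(H)=0$, tacitly uses that $H$ acts nontrivially on $V$. The paper makes the same implicit assumption (see the sentence just after the proposition, where injectivity of $w\mapsto([h_1,w],\dots,[h_d,w])$ is justified by $H$ being ``a non-trivial irreducible group''), so this is not a defect of your argument relative to the paper.
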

\begin{proof}
Let  $K=\langle h_1w_1,\dots,h_dw_d\rangle$. First we  prove, by induction on $u$, that if $K\neq G$
then $(2)$ holds. Let $z_i=h_i(w_{i,1},\dots,w_{i,u-1},0)$ and let  $Z=\langle z_1, \dots, z_d
\rangle$. If $Z \not\cong V^{u-1}H$, then, by induction,
there exist $\lambda_1,\dots,\lambda_{u-1} \in F$ and $w \in V$
with $(\lambda_{1}, \ldots , \lambda_{u-1},w) \not=
(0,\ldots,0,0)$ such that $\sum_{1\leq j\leq u-1} \lambda_j w_{i,j} = [h_i,w]$ for each $i\in\{1,\dots,d\}.$ In this case
$\lambda_1,\dots,\lambda_{u-1},0$ and $w$ are the requested
elements.

 So we may assume $Z \cong V^{u-1}H$. Set
$V_u=\{(0,\dots,0,v)\mid v \in V\}$. We have $ZV_u = KV_u=G$
and $Z \neq G$; this implies that $Z$ is a complement of $V_u$ in
$G$ and therefore there exists $\delta \in {\rm{Der}}(Z,V_u)$
such that $\delta(z_i)=w_{i,u}$ for each $i\in\{1,\dots,d\}.$ However,
by Propositions 2.7 and 2.10 of \cite{AG}, we have
${\rm{H}}^{1}(Z,V_u)\cong F^{u-1}$. More precisely if $\delta
\in {\rm{Der}}(Z,V_u)$, then there exist an inner derivation
$\delta_w \in {\rm{Der}}(H,V)$ (for $w\in V$)
 and $\lambda_1,\dots,\lambda_{u-1}
\in F$ such that for each $h(v_1,\dots,v_{u-1},0) \in Z$ we
have
$$ \delta(h(v_1,\dots,v_{u-1},0))=\delta_w(h)+\lambda_1v_1 +
\dots+ \lambda_{u-1}v_{u-1}=[h,w]+\lambda_1v_1 +
\dots+\lambda_{u-1}v_{u-1}.$$
 In particular $-\sum_{1\leq j\leq u-1} \lambda_j w_{i,j}+w_{i,u}=[h_i,w]$ for each $i\in\{1,\dots,d\},$
 hence  $(2)$ holds.

Conversely, if (2) holds then $\langle h(v_1,\dots,v_u)\mid
[h,w]=\lambda_1v_1+\dots +\lambda_uv_u\rangle$ is a proper subgroup
of $G$ containing $K$.
\end{proof}

Let $n$ be the dimension of $V$ over $F=\mathrm{End}_{H}(V)$. We have an injective homomorphism from $H$ to
$GL(n,F)$: we will use the notation $\overline h$ to denote the image of $h\in H$ under this homomorphism.
We will use the additive notation for $V=F^n$ and we will identify its elements
with $1 \times n$ matrices  with coefficient in $F.$ With this notation, if
$v\in V$ and $h\in H$, then $v^h$ is the $1 \times n$ matrix  ${v}\overline{h}$
obtained using the matrix multiplication; in particular, $[h,v]=v-v\overline{h}=v(1-\overline{h})$.

Let $\pi_i:V^u \mapsto V$ the canonical projection on the $i$-th component:
 $\pi_i(v_1, \dots , v_u)=v_i$.

If  $w_i=(w_{i,1},\dots,w_{i,u})\in V^u$, $i=1, \dots ,d$, consider the vectors
$$r_j= (\pi_j(w_1), \dots , \pi_j(w_d))=(w_{1,j}, \dots, w_{d,j})  \in V^d, j=1, \dots ,u.$$
%Let $w_i=(w_{i,1},\dots,w_{i,u})\in V^u$ and denote the $u \times n$ matrix
%with rows ${w_{i,1}},\dots,{w_{i,u}}$  by $A_i$.
Then  Proposition \ref{crit} says that  the elements $h_1w_1,\dots,
h_dw_d$ generate a proper subgroup of $G$ if and only
if
there exists a non-zero vector $(\lambda_{1}, \ldots ,
\lambda_{u}, w)$ in $F^{u} \times V$ such that
$$
\sum_{1\leq j\leq u} \lambda_j r_j = \big([h_1,w], \dots , [h_d,w]\big).
$$

This is equivalent to saying that there exist elements $w_1,\dots,w_d$
in $V^u$ such that $\langle h_1w_1,\dots,h_dw_d \rangle=
G$ if and only if
 there exist elements
$r_1, \dots , r_u$ in $V^d$ that are linearly independent modulo the vector space
$$D=\{ \big([h_1,w], \dots , [h_d,w]\big)\in V^d \mid w\in V\}.$$
Notice that $\dim_F(D)=\theta\dim_F(V)$ where $\theta=0$ or $1$ according to whether $V$ is a trivial $H$-module or not:
 indeed, if $H$ is non-trivial,
then the linear map $\alpha: V \to V^d$, $w \mapsto \big([h_1,w], \dots , [h_d,w]\big)$ is injective (if $w \in
\ker \alpha$ then $[h_i, w]=w$ for each $i\in\{1,\dots,d\}$, against the fact that ${h_1},\dots,{h_d}$
generate a non-trivial irreducible group)
%, hence  $\dim_F(D)=\dim_F(V)=n$.
 Therefore,  there exist elements $w_1,\dots,w_d$
in $V^u$ such that $\langle h_1w_1,\dots,h_dw_d \rangle=
G$ if and only if
$u \leq \dim_F (V^d)-\dim_F(D)=nd-\theta n$.

We now discuss the same question in the case of invariable generation: we are
going to prove that $G=V^u\rtimes H$ admits a set  of
cardinality $d$ invariably generating $G$  if and only if, in the previous notations, there exists
a set $\{h_1,\dots,h_d\}$ invariably generating $H$ with the property
that $u\leq nd-\sum_{1\leq i \leq d}\dim_F{[h_i,V]}=\sum_i \dim_{\End_H(V)}C_V(h_i).$

\begin{prop}\label{matrici}
Suppose that $h_1,\dots,h_d$ invariably generate $H$ and that $\h(H,V)=0.$
 Let $w_1,\dots,w_d\in V^u$ with
$w_i=(w_{i,1},\dots,w_{i,u})$. For $j\in \{1,\dots,u\}$, consider the vectors
$$r_j=\big(\pi_j(w_1), \dots , \pi_j(w_d)\big)=(w_{1,j}, \dots, w_{d,j})\in V^{d}.$$
Then $h_1w_1, h_2w_2,\dots,h_d w_d$ invariably generate
 $V^u\rtimes H$ if and only if the vectors $r_1,\dots,r_u$ are linearly independent modulo
 $$W=\{(u_1,\dots,u_d)\in V^{d}\mid  u_i \in [h_i,V], \ i=1, \dots , d\}.$$
In particular, there exist some elements $w_1,\dots,w_d\in V^u$ such that $h_1w_1, h_2w_2,\dots,h_d w_d$ invariably generate
  $V^u\rtimes H$ if and only if
$$u\leq nd-\dim W=\sum_i \dim_{\End_H(V)}C_V(h_i).$$
\end{prop}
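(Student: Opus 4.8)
The plan is to reduce the statement, via Proposition \ref{crit}, to an assertion about the conjugates of the elements $h_iw_i$. Writing a general element of $G$ as $ht$ with $h\in H$ and $t=(t_1,\dots,t_u)\in V^u$, a direct computation in the semidirect product gives
\[(h_iw_i)^{ht}=h_i^{h}\bigl(w_i\ol h+[h_i^{h},t]\bigr),\]
where $\ol h$ denotes the diagonal action of $h$ on $V^u$ and $[h_i^{h},t]=([h_i^{h},t_1],\dots,[h_i^{h},t_u])$. Thus the $H$-component of a conjugate of $h_iw_i$ is an arbitrary conjugate $h_i^{h}$ of $h_i$, and its $V^u$-component is $w_i\ol h+[h_i^{h},t]$. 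Since $h_1,\dots,h_d$ invariably generate $H$, every tuple of conjugates $h_1^{h^{(1)}},\dots,h_d^{h^{(d)}}$ still generates $H$, so Proposition \ref{crit} applies to each such tuple (the hypothesis $\h(H,V)=0$ is precisely what makes this possible), and the projection to $H$ of any subgroup generated by a family of conjugates is all of $H$.

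Consequently $\{h_iw_i\}$ fails to invariably generate $G$ exactly when there are conjugators $h^{(i)}\in H$, $t^{(i)}\in V^u$ and a nonzero $(\lambda_1,\dots,\lambda_u,w)\in F^u\times V$ with
\[\sum_{1\le j\le u}\lambda_j\bigl(w_{i,j}\,\ol{h^{(i)}}+[h_i^{h^{(i)}},t^{(i)}_j]\bigr)=[h_i^{h^{(i)}},w]\qquad(1\le i\le d).\]
I would show that the existence of such data is equivalent to $r_1,\dots,r_u$ being linearly dependent modulo $W$. For the forward implication, both $\sum_j\lambda_j[h_i^{h^{(i)}},t^{(i)}_j]$ and $[h_i^{h^{(i)}},w]$ lie in $[h_i^{h^{(i)}},V]$, so the displayed relation forces $\sum_j\lambda_j w_{i,j}\,\ol{h^{(i)}}\in[h_i^{h^{(i)}},V]$. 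The key identity here is $[h_i^{y},V]=[h_i,V]\,\ol y$ for $y\in H$, which follows from $V(1-\ol y^{-1}\ol{h_i}\,\ol y)=V(1-\ol{h_i})\ol y$; since elements of $F=\End_H(V)$ commute with the action of $H$, right-multiplying by $\ol{h^{(i)}}^{-1}$ yields $\sum_j\lambda_j w_{i,j}\in[h_i,V]$ for each $i$, that is $\sum_j\lambda_j r_j\in W$. Moreover $(\lambda_1,\dots,\lambda_u)\neq 0$: were all $\lambda_j$ zero, the relation would give $w\in\bigcap_i C_V(h_i^{h^{(i)}})=C_V(H)=0$, contradicting nontriviality of the tuple.

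For the converse, assume $\sum_j\lambda_j r_j\in W$ with $(\lambda_1,\dots,\lambda_u)\neq0$, and write $\sum_j\lambda_j w_{i,j}=[h_i,s_i]$ with $s_i\in V$. Taking $h^{(i)}=1$ and $w=0$, it suffices to find $t^{(i)}\in V^u$ with $\sum_j\lambda_j t^{(i)}_j=-s_i$; since some $\lambda_{j_0}\neq0$ and $F$ is a division ring by Schur's lemma, I set $t^{(i)}_{j_0}=-\lambda_{j_0}^{-1}s_i$ and the remaining components zero. These conjugators lie in $V^u$ and, by Proposition \ref{crit}, witness the failure of invariable generation, completing the equivalence. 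The final assertion is then a dimension count: the vectors $r_1,\dots,r_u$ can be prescribed arbitrarily by choosing the $w_{i,j}$, so suitable $w_i$ exist iff $u\le\dim_F(V^d/W)=nd-\dim_F W=\sum_i\dim_F C_V(h_i)$, using $\dim_F[h_i,V]=n-\dim_F C_V(h_i)$.

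The main obstacle, I expect, is the careful separation of the two kinds of conjugation. Conjugation by $V^u$ is what actually produces an obstruction lying in $W$, whereas conjugation by $H$ merely twists the $H$-components; the real content is that this twisting cannot enlarge the space of obstructions beyond $W$. Making this precise—via the identity $[h_i^{y},V]=[h_i,V]\,\ol y$ together with the commuting of $F$ with the $H$-action—while simultaneously guaranteeing that the scalar part $(\lambda_1,\dots,\lambda_u)$ of the witnessing tuple is nonzero, is the delicate point of the argument.
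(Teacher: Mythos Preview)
Your argument is correct and follows essentially the same route as the paper: compute the conjugates of $h_iw_i$, apply Proposition~\ref{crit} to each conjugated tuple, and then untwist the $H$-part of the conjugation to reduce the obstruction space to $W$. The paper packages the untwisting via the coordinatewise isomorphism $f_{(x_1,\dots,x_d)}\colon (v_1,\dots,v_d)\mapsto(v_1\ol{x_1},\dots,v_d\ol{x_d})$ of $V^d$ and then argues that letting the $V^u$-parts of the conjugators vary forces independence modulo the full $W$; you accomplish the same thing elementwise through the identity $[h_i^{y},V]=[h_i,V]\,\ol y$ and by explicitly constructing conjugators in the converse direction. Your treatment of the nondegeneracy of $(\lambda_1,\dots,\lambda_u)$ via $C_V(H)=0$ is exactly the point the paper uses (implicitly, in the discussion preceding the proposition) and is valid whenever $H$ acts nontrivially.
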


\begin{proof}
Let $g_i=y_ix_i$
 with $x_i\in H$ and $y_i=(y_{i,1},\dots,y_{i,u})\in V^u$ for $i\in\{1,\dots,d\}$
and let $X_{g_1,\dots, g_d}=\langle (h_1w_1)^{g_1},\dots, (h_dw_d)^{g_d}\rangle.$
We have
$$(h_iw_i)^{g_i}=(h_i^{y_i}w_i)^{x_i}=
h_i^{x_i}([h_i,y_i]+w_i)\overline{x_i}=h_i^{x_i}z_i$$
where
 $z_i=([h_i,y_i]+w_i)\overline{x_i} \in V^u$.
 Then $X_{g_1,\dots, g_d}=G$ if and only if the vectors of $V^d$
 $$\big(\pi_j(z_1), \dots , \pi_j(z_d)\big)=
\big(   ([h_1,y_{1,j}]+w_{1,j})\overline{x_1}, \dots , ([h_d,y_{d,j}]+w_{d,j})\overline{x_d}\big),
 $$
 $j=1,\dots,u$,
are linearly independent modulo the subspace
\begin{eqnarray*}
\tilde{D}&=&\{ \big([h_1^{x_1},w], \dots , [h_d^{x_d},w]\big)\in V^d \mid w\in V\}\\
&=& \left\{\left(\left[h_1,w\overline{x_1^{-1}}\right] \overline{x_1}, \dots , \left[h_d,w\overline{x_d^{-1}}\right] \overline{x_d} \right)\in V^d \mid w\in V\right\}.
\end{eqnarray*}
Note that the map $f_{(x_1, \dots, x_d)}:V^d \mapsto V^d$ defined by
 $$f_{(x_1, \dots, x_d)}(v_1, \dots , v_d)=(v_1 \overline{x_1}, \dots , v_d \overline{x_d})$$ is an isomorphism.
Therefore the previous condition is equivalent to have that the vectors
 $$\big(   [h_1,y_{1,j}]+w_{1,j}, \dots , [h_d,y_{d,j}]+w_{d,j}\big)=r_j+\big( [h_1,y_{1,j}],  \dots , [h_d,y_{d,j}]\big),
 $$
 for $j=1,\dots,u,$
 are linearly independent modulo
  the subspace
$${D}=\left\{\left(\left[h_1,w\overline{x_1^{-1}}\right], \dots ,\left[h_d,w\overline{x_d^{-1}}\right]\right)\in V^d \mid w\in V \right\}.$$
Since this condition has to hold for every choice of  $y_i \in V^u$ and $x_j \in H$,  this means that
 the elements
 ${r_1}, \dots , {r_u}$ have to be
linearly independent modulo
  the subspace
$W=\{(u_1,\dots,u_d)\in V^{d}\mid  u_i \in [h_i,V], \ i=1, \dots , d\},$ as required.
\end{proof}

\noindent{\bf{Example.}} Assume $H=\GL(2,2)\cong\perm(3)$ and let $V=F_2\times F_2.$
Suppose that $(h_1,\dots,h_d)$ is a sequence of invariable generators of $H$ where, let say,
there are  $a$ entries of order 2, $b$ entries of
order 3 and $c$ entries equal to the identity element, with $a+b+c=d.$
 Since $h_1,\dots,h_d$  invariably generate $H$, then necessarily $a\ge 1$ and $b \ge 1$.
 According to the previous proposition, this sequence
can be lifted to a sequence of invariable generators of  $V^u\rtimes H$
if and only if $u\leq a+2c.$ Since $a+2c=a+2(d-a-b)=2d-a-2b$ has its maximal value at
$a=b=1$ and $c=d-2$, %hence
 $V^u\rtimes H$ can be invariably generated by
$d$ elements if and only if $u\leq 1+2(d-2)=2d-3.$
 In particular $G:=V^2\rtimes H$ cannot be invariably
generated by 2 elements and consequently, since the prime divisors of $|G|$
are only 2 and 3, $G$ has no coprime-invariable generating set, hence it is not CIG.

\

The previous proposition explains when and how a set of invariable generators
for $H$ can be lifted to a set of invariable generators for $V^u\rtimes H.$
The situation changes if we consider the same question for coprime-invariable generating sets:
in this case, by Lemma \ref{inizio}, when we try to lift a CIG-set $\{h_1,\dots,h_d\}$ of $H$
to a CIG-set of $V^u\rtimes H$ we are free to modify via multiplication by elements
of $V^u$ only one of the $d$ generators and consequently we get a more restrictive condition,
as described by the following proposition.

\begin{prop}\label{ltcase}Suppose that $h_1,\dots,h_d$ have coprime order and invariably generate  $H$ and that $\h(H,V)=0.$
 Assume $h_i$ is a $p'$-element for $i\neq 1$. Let $v=(v_{1},\dots,v_{u})\in V$. Then
$h_1v,h_2,\dots,h_d$  invariably generate  $G=V^u\rtimes H$ if and only if
the vectors $v_{1},\dots,v_{u}$ are linearly independent modulo $[h_1,V].$
\end{prop}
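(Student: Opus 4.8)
The plan is to obtain this statement as the special case $w_1=v$ and $w_2=\dots=w_d=0$ of Proposition \ref{matrici}. With this choice the lifted tuple $h_1w_1,h_2w_2,\dots,h_dw_d$ becomes exactly $h_1v,h_2,\dots,h_d$, and the hypotheses we genuinely need---that $h_1,\dots,h_d$ invariably generate $H$ and that $\h(H,V)=0$---are precisely those of Proposition \ref{matrici}. The extra assumptions that the $h_i$ have coprime order and that $h_i$ is a $p'$-element for $i\neq 1$ play no role in this particular argument; they merely record the setting in which one lifts a CIG-set of $H$, where (by Lemma \ref{inizio}) only the single generator $h_1$ whose order is divisible by $p$ may be perturbed by an element of $V^u$.

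First I would compute the auxiliary vectors $r_j$ of Proposition \ref{matrici}. Since $\pi_j(w_1)=v_j$ while $\pi_j(w_i)=0$ for every $i\neq 1$, we obtain
\[
r_j=\big(\pi_j(w_1),\dots,\pi_j(w_d)\big)=(v_j,0,\dots,0)\in V^d,\qquad j=1,\dots,u.
\]
By Proposition \ref{matrici}, the elements $h_1v,h_2,\dots,h_d$ invariably generate $G$ if and only if $r_1,\dots,r_u$ are linearly independent modulo $W=\{(u_1,\dots,u_d)\in V^d\mid u_i\in[h_i,V]\}$.

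Finally I would unwind this condition. For $\alpha_1,\dots,\alpha_u\in F=\End_H(V)$ we have $\sum_{j=1}^u\alpha_j r_j=\big(\sum_{j=1}^u\alpha_j v_j,\,0,\dots,0\big)$, and this vector lies in $W$ exactly when $\sum_{j=1}^u\alpha_j v_j\in[h_1,V]$, since the remaining coordinates equal $0$ and the memberships $0\in[h_i,V]$ for $i\neq 1$ are automatic. Hence $r_1,\dots,r_u$ are linearly independent modulo $W$ if and only if no nontrivial $F$-combination of $v_1,\dots,v_u$ lies in $[h_1,V]$, that is, if and only if $v_1,\dots,v_u$ are linearly independent modulo $[h_1,V]$, which is the asserted equivalence. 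I expect essentially no obstacle beyond this bookkeeping: the content is carried entirely by Proposition \ref{matrici}, and the only point requiring a moment's care is the observation that, once the coordinates $i\neq 1$ are zero, the membership conditions attached to them in the definition of $W$ become vacuous, so that the linear-independence criterion collapses to a condition on the first coordinate alone.
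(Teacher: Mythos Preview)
Your proof is correct and follows essentially the same route as the paper: specialise Proposition~\ref{matrici} to $w_1=v$, $w_2=\dots=w_d=0$, so that $r_j=(v_j,0,\dots,0)$, and observe that linear independence of the $r_j$ modulo $W$ collapses to linear independence of the $v_j$ modulo $[h_1,V]$. The paper quotes the proof of Theorem~\ref{abcase} for the necessary direction and Proposition~\ref{matrici} only for sufficiency, but since Proposition~\ref{matrici} is already an equivalence your single application handles both directions at once; this is if anything slightly cleaner.
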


\begin{proof}If follows from the proof of Theorem \ref{abcase} that the condition
that the vectors $v_{1},\dots,v_{u}$ are linearly independent modulo $[h_1,V]$
is necessary to have that $h_1v,h_2,\dots,h_d$  invariably generate  $G.$
On the other hand, in the notation of Proposition \ref{matrici},  if
the vectors $v_{1},\dots,v_{u}$ are linearly independent modulo $[h_1,V],$
then $v_1^*=(v_1,0,\dots,0),\dots,v_u^*=(v_u,0,\dots,0)$ are linearly independent
modulo $W$, hence $h_1v,h_2,\dots,h_d$ is a CIG-set of $G.$
\end{proof}

Now we apply the theory of crowns to reduce the problem of invariable generation for a soluble groups $G$
to the particular case of a semidirect product $V^u\rtimes H$, as studied above.

Let $G$ be a finite soluble group, and let $\mathcal V$ be a set
of representatives for the irreducible $G$-groups that are
$G$-isomorphic to a complemented chief factor of $G$. For $V \in
\mathcal V$ let $R_G(V)$ be the smallest normal subgroup contained
in $C_G(V)$ with the property that $C_G(V)/R_G(V)$ is
$G$-isomorphic to a direct product of copies of $V$ and it has a
complement $H$ in $G/R_G(V)$.
The factor group  $C_G(V)/R_G(V)$ is usually called ``crown'' of $G$ associated to the $G$-module $V$
 (for more details, see \cite{classes}).
The non-negative integer
$\delta_G(V)$ defined by $C_G(V)/R_G(V)\cong_G V^{\delta_G(V)}$ is
called the $V$-rank of $G$ and it coincides with the number of
complemented factors in any chief series of $G$ that are
$G$-isomorphic to $V.$  Actually, $G/R_G(V) \cong V^{\delta_G(V)}  \rtimes H$, where $H$ acts diagonally on each component of $V^{\delta_G(V)}$
(see \cite{Ga}).

\begin{prop}\label{reducecrown}
\label{lemma}\cite[Proposition 2.4]{am} Let $G$ and $\mathcal V$ be as above. Let $x_{1},
\ldots , x_{d}$ be elements of $G$ such that $\langle
x_1,\dots,x_d,R_G(V)\rangle=G$ for any $V \in \mathcal V$. Then
$\langle x_1,\dots,x_d\rangle=G$.
\end{prop}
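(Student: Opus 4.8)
The plan is to argue by contradiction, reducing to a single maximal subgroup and then matching its core against the defining properties of the relevant crown. Suppose that $K=\langle x_1,\dots,x_d\rangle\neq G$, and choose a maximal subgroup $M$ of $G$ with $K\leq M$. Put $C=\Core_G(M)$. Since $G$ is soluble, $G/C$ is a \emph{primitive soluble} group (with $M/C$ a core-free maximal subgroup), so its socle $N/C$, where $C\leq N\trianglelefteq G$, is the unique minimal normal subgroup of $G/C$; it is elementary abelian, self-centralizing, and complemented by $M/C$. In particular $N/C$ is an irreducible $G$-module, and from $M\cap N=C$ and $MN=G$ we see that $M$ complements the chief factor $N/C$, so $N/C$ is a complemented chief factor of $G$. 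Hence there is a $V\in\mathcal V$ with $V\cong_G N/C$.

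The key step is to recognise $C=\Core_G(M)$ itself as a subgroup satisfying the three conditions that define the crown $R_G(V)$. First I would identify $C_G(V)=N$: the kernel of the action of $G$ on the module $N/C$ is the preimage in $G$ of $C_{G/C}(N/C)$, and since the socle of the primitive soluble group $G/C$ is self-centralizing this centralizer equals $N/C$, so the kernel is $N$; as $V\cong_G N/C$, the modules $V$ and $N/C$ have the same kernel, giving $C_G(V)=N$. Now $C$ meets all the requirements in the definition of $R_G(V)$: it is normal in $G$ and contained in $C_G(V)=N$; the quotient $C_G(V)/C=N/C\cong_G V$ is a direct product of copies of $V$ (here a single copy, i.e.\ the case $\delta_G(V)=1$); and $M/C$ is a complement to $N/C$ in $G/C$.

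Because $R_G(V)$ is by definition the \emph{smallest} normal subgroup of $G$ enjoying these three properties, it follows at once that $R_G(V)\leq C=\Core_G(M)\leq M$. Consequently $\langle x_1,\dots,x_d,R_G(V)\rangle\leq M<G$, which contradicts the hypothesis that $\langle x_1,\dots,x_d,R_G(V)\rangle=G$ for this particular $V\in\mathcal V$. Therefore $K=G$, as required.

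The hard part is the bridge between the generation hypothesis and the crown machinery, concentrated in the identification $C_G(V)=N$ and in verifying that $\Core_G(M)$ literally satisfies the minimality-defining conditions of $R_G(V)$. This is exactly where solubility is used, since it guarantees that the socle of the primitive quotient $G/\Core_G(M)$ is abelian, self-centralizing, and complemented; once this structural input is in place, the rest is the standard ``enlarge $K$ to a maximal subgroup'' contradiction.
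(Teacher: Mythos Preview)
The paper does not give its own proof of this proposition; it simply quotes \cite[Proposition 2.4]{am}. Your argument is correct and is essentially the standard one: choose a maximal subgroup $M\geq K$, set $C=\Core_G(M)$, use solubility to see that $G/C$ is primitive with abelian self-centralising socle $N/C$ complemented by $M/C$, identify $C_G(V)=N$ for the corresponding $V\in\mathcal V$, and then observe that $C$ itself satisfies the three defining conditions for $R_G(V)$ --- normal in $G$, contained in $C_G(V)$, with $C_G(V)/C\cong_G V$ complemented in $G/C$ --- so by minimality $R_G(V)\leq C\leq M$, contradicting the hypothesis for this $V$.

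One small point of phrasing: your parenthetical ``i.e.\ the case $\delta_G(V)=1$'' is misleading. You are not asserting that $\delta_G(V)=1$; you are merely exhibiting $C=\Core_G(M)$ as \emph{one} member of the family of normal subgroups over which $R_G(V)$ is the minimum, and for this particular $C$ the quotient $C_G(V)/C$ happens to be a single copy of $V$. In general $R_G(V)$ may be strictly smaller than $C$ with $\delta_G(V)>1$. This does not affect the logic, but the aside should be reworded or dropped. You are also tacitly using that ``smallest'' in the definition of $R_G(V)$ means ``contained in every normal subgroup with those properties''; this is indeed true (the family of such subgroups is closed under intersection), and is part of the crown theory the paper takes as background.
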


%\begin{thm}\label{CIG=PCIG}Let $G$ be a finite soluble
%group and let $\{g_1,\dots,g_d\}$ be an invariant coprime set of generators of $G$.
%For each $i\in \{1,\dots d\},$ we have $g_i=x_{1u}\cdots x_{iu_i}$ where $x_{ij}$ is a power of $g_i$ of prime power order.
%The set $X=\{x_{ij} \mid 1\leq i \leq d, 1\leq j \leq u_i\}$ is an invariant set of generators of $G.$
%\end{thm}

The previous result allows us to reduce the proof of Theorem \ref{CIG=PCIG}  to the quotient groups $G/R_G(V)$.

\begin{proof}[Proof of Theorem \ref{CIG=PCIG}] Let $G$ be a soluble group.
We want to prove that if  $\{g_1,\dots,g_d\}$ is a CIG-set of $G$,
 then the set
 $X=\{x_{ij} \mid 1\leq i \leq d, 1\leq j \leq u_i\}$ is a PCIG-set of $G$, where the $ x_{ij}$, $1\leq j \leq u_i$,
 are the  powers of $g_i$ of prime power order.

The proof is by induction on $|G|,$ case $|G|=1$ being trivial.
By Proposition \ref{reducecrown} it is not restrictive to assume that $G=V^u\rtimes H$ where $H$ is an irreducible subgroup of $\GL(V)$ and  $V$ is a $p$-group.
 Note that, as $G$ is soluble,  $H^1(H,V)=0$ \cite[Lemma 1]{st}.
 We may assume that $p$ does not divide $|g_i|$ if $i\neq 1$: in particular it is not restrictive to
assume $g_i \in H$ if $i\neq 1.$
Moreover it is not restrictive to assume that $g_1=abv$ with $a,b \in H,$  $v \in V$,
$a$ and $bv$ elements of $\langle g_1 \rangle$, $(|a|,p)=1,$ and $|bv|$ a $p$-power.

Let $v=(v_1,\dots,v_u).$
 As $ab, g_2,\dots,g_d$ invariably generate $H$, by Lemma \ref{ltcase},
the vectors $v_1,\dots,v_u$ are linearly independent modulo $[ab,V].$ We may assume $x_{1,1}=bv$ and
$x_{i,j}\in H$, $(|x_{i,j}|,p)$
if $(i,j)\neq (1,1).$

By induction
$$\{b,x_{i,j}\mid 1\leq i \leq d, 1\leq j \leq u_i, (i,j)\neq (1,1)\}$$
 is PCIG-set of $H.$ Since $bv \in \langle abv \rangle,$ there exists $n\in \mathbb N$
such that $b=(ab)^n.$ We may identify $ab$ and $b$ with two matrices $X$ and $Y$ such that $Y=X^n.$
 In this identification $[ab,V]$
correspond to the image $V(1-X)$ of the linear map $1-X$ and $[b,V]$ to the image $V(1-Y)$ of the linear map $1-Y.$ Clearly
we have $$V(1-Y)=V(1-X^n)=V(1+X+\dots+X^{n-1})(1-X)\leq V(1-X),$$
i.e. $[b,V]\leq [ab,V]$, hence
the vectors $v_1,\dots,v_u$, being linearly independent modulo $[ab,V],$
are linearly independent modulo $[b,V]$ too. By Lemma \ref{ltcase} we conclude that  the set
$$\{bv,x_{i,j}\mid 1\leq i \leq d, 1\leq j \leq u_i, (i,j)\neq (1,1)\}=\{x_{i,j} \mid 1\leq i \leq d, 1\leq j \leq u_i\}$$
 invariably generates $G,$ and is actually  a PCIG-set of $G$, as required.
\end{proof}

%%%%%%%%%%%%%%%%%%%%%%%%%%%%%%%%   A bound for $d_I(G)$ when $G$ is a soluble group   %%%%%%%%%%%%%%%%%%%%%%%%%%%%%%%

\section{A bound for $d_I(G)$ when $G$ is a soluble group}\label{invariable}
In this section we will prove Theorem \ref{sol}
that bounds the smallest cardinality $d_I(G)$ of an invariably generating set of a finite
 soluble  group $G$ by a function of the  numbers  $\delta_G(A)$ of complemented factors $G$-isomorphic to $A$ in a chief series of $G$,
where $A$ runs in the set $\mathcal V$ of representatives for the irreducible $G$-groups that are
$G$-isomorphic to a complemented chief factor of $G$.

First we need to recall the following results:

\begin{lemma}{\cite[Lemma 1.3.6]{classes}}\label{corona}
Let $G$ be a finite soluble group with trivial Frattini subgroup. There exists
a crown  $C/R$ and a non trivial normal subgroup $U$ of $G$ such that $C=R\times U.$
\end{lemma}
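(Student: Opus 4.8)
The plan is to push the whole problem into the socle, where the hypothesis $\frat(G)=1$ forces a splitting, and the only real work is to choose the correct crown.

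First I would record the consequences of $\frat(G)=1$. Since $G$ is soluble every minimal normal subgroup is elementary abelian, and an abelian normal subgroup $A$ with $A\cap\frat(G)=1$ is complemented; as $\frat(G)=1$ this applies to each of them, so every minimal normal subgroup is complemented and $F:=\soc(G)$ is a complemented, self-centralizing ($C_G(F)=F$) abelian normal subgroup, whence $G=F\rtimes M$ for a suitable $M$. Decompose $F=\prod_{V}U_V$ into its homogeneous components, where for $V\in\mathcal V$ I write $U_V$ for the product of all minimal normal subgroups of $G$ that are $G$-isomorphic to $V$; thus $U_V\cong_G V^{m_V}$ and $m_V$ is the number of chief factors $G$-isomorphic to $V$ lying inside $F$. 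Each $U_V$ is a product of normal subgroups, hence $U_V\trianglelefteq G$.

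Next I would single out the right crown. For a fixed $V$ put $C=C_G(V)$ and $R=R_G(V)$, so that $C/R\cong_G V^{\delta_G(V)}$ by definition. Note that $U_V\le C$ (a copy of $V$, being abelian and $G$-isomorphic to $V$, centralizes $V$) and that $U_V$ is a semisimple, $V$-isotypic $G$-module. The key point is to choose $V$ so that the $V$-isotypic, complemented ``top'' of $C$ is supplied entirely by the socle, i.e. so that $\delta_G(V)=m_V$; equivalently, so that no complemented chief factor of $G$ that is $G$-isomorphic to $V$ lies strictly above $F$. For such a ``bottom'' type the only source of the crown $C/R\cong V^{\delta_G(V)}$ is $U_V$, so the composite $U_V\hookrightarrow C\twoheadrightarrow C/R$ is a $G$-isomorphism; in particular $U_V\cap R=1$, $U_VR=C$ and $U_V\ne 1$ (because $m_V=\delta_G(V)\ge 1$). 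Since $C/R$ is abelian we have $[C,C]\le R$, and then $U_V\trianglelefteq G$ together with $U_V\cap R=1$ and $U_VR=C$ yields the internal decomposition
\[
C=R\times U_V .
\]
Taking $U=U_V$ finishes the proof.

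The heart of the argument — and the step I expect to be the main obstacle — is the existence of a bottom type, i.e. of a $V\in\mathcal V$ with $m_V\ge 1$ and $\delta_G(V)=m_V$. This cannot be omitted nor carried out uniformly over all types: in $A_4$ the crown of the top factor $C_3$ would require a normal Sylow $3$-subgroup, and in $C_2\times\perm(3)$ the crown of the central type $C_2$ has $\delta=2$ and its complement would be a non-normal Sylow $2$-subgroup, so those crowns genuinely fail to split. Concretely, a complemented copy of $V$ sitting strictly above $F$ contributes to $C/R$ a $V$-summand coming from $M\cong G/F$, which is realised by no normal subgroup of $G$. To produce a bottom type I would translate the condition into the faithful $M$-module $F$: since for a socle type $F$ acts trivially on every copy of $V$, a complemented $V$-factor above $F$ is the same as a complemented chief factor of $M=G/F$, so I am seeking a composition factor $V$ of $F$ that is \emph{not} a complemented chief factor of $M$. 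I would establish the existence of such a $V$ by induction on $\lvert G\rvert$ (or directly from the crown machinery of \cite{classes,Ga}), the self-centralizing identity $C_G(F)=F$ being exactly what prevents every socle type from reappearing above the socle.
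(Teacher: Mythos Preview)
The paper does not prove this lemma; it is quoted from \cite[Lemma~1.3.6]{classes} and invoked as a black box in the proof of Theorem~\ref{sol}. There is therefore no in-paper argument against which to compare your proposal.

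Judged on its own, your proposal is a plan rather than a proof. The reduction to finding a ``bottom type'' $V$ --- a socle constituent with $\delta_G(V)=m_V$ --- is a reasonable organising idea, and the verification that $C_G(V)=R_G(V)\times U_V$ for such a $V$ is plausible (though the injectivity of $U_V\hookrightarrow C\twoheadrightarrow C/R$ deserves more than a dimension count: you must actually argue that no minimal normal subgroup $G$-isomorphic to $V$ can lie inside $R_G(V)$, which is itself a non-trivial property of the crown). The real gap is the one you yourself flag: you do not prove that a bottom type exists. Saying it ``would'' follow by induction on $\lvert G\rvert$, or ``directly from the crown machinery of \cite{classes,Ga}'', is not an argument --- and the second option is circular, since the lemma you are trying to prove \emph{is} the relevant piece of that machinery in \cite{classes}. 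Your own examples $A_4$ and $C_2\times\perm(3)$ correctly show that an arbitrary socle type need not be bottom, so genuine work is required here; until you supply it (for instance, an explicit inductive step showing that some composition factor of $F$ fails to reappear as a complemented chief factor of $G/F$), the proposal does not establish the statement.
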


\begin{lemma}{\cite[Proposition 11]{crowns}}\label{sotto} Assume that $G$ is a finite soluble group with trivial Frattini subgroup and let $C, R, U$ be as in the statement of Lemma \ref{corona}. If $KU=KR=G,$ then $K=G.$
\end{lemma}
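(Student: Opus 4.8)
The plan is to argue by contradiction, reducing to a single maximal subgroup and then contradicting the minimality built into the definition of $R=R_G(V)$. Suppose $K\neq G$ and choose a maximal subgroup $M$ of $G$ with $K\le M$. Since $KU=KR=G$ and $K\le M$, we immediately get $MU=MR=G$; in particular neither $U$ nor $R$ is contained in $M$, so $M$ supplements both direct factors of the crown decomposition $C=R\times U$ of Lemma \ref{corona}.

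First I would show that $M\cap U$ is normal in $G$. This is where the abelianness of $U$ is used: because $V$ is a chief factor of the soluble group $G$ it is elementary abelian, and $U\cong_G V^{\delta_G(V)}$ is therefore abelian. Since $U\trianglelefteq G$ the subgroup $M$ normalises $U\cap M$, and since $MU=G$ with $U$ abelian, every element of $U$ centralises $U\cap M\le U$; hence $U\cap M$ is normalised by $MU=G$, i.e. $U\cap M\trianglelefteq G$. Because $M$ is maximal and $MU=G$, the quotient $U/(U\cap M)$ is a chief factor of $G$ complemented by $M$, and as $U$ is a homogeneous module isomorphic to a direct sum of copies of $V$, this chief factor is $G$-isomorphic to $V$.

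Next I would pass to the primitive quotient $P=G/\Core_G(M)$. Since $G$ is soluble and $M$ is a maximal subgroup, $P$ is soluble and primitive, hence monolithic, with $\soc(P)$ its unique minimal normal subgroup, complemented by the image of $M$. Now $U\cap M\le\Core_G(M)$ forces $U\cap\Core_G(M)=U\cap M$, so the image of $U$ in $P$ is isomorphic to $U/(U\cap M)\cong_G V$; moreover it is not contained in the image of $M$ (as $U\not\le M=M\,\Core_G(M)$), so it is a nontrivial minimal normal subgroup of $P$ and therefore $\soc(P)\cong_G V$. At this point I invoke the defining property of the crown: $R=R_G(V)$ is contained in every maximal subgroup $M$ of $G$ for which $G/\Core_G(M)$ is primitive with socle $G$-isomorphic to $V$ (equivalently, $R_G(V)$ is the intersection of the cores of all such maximal subgroups). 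This gives $R\le\Core_G(M)\le M$, whence $MR=M\neq G$, contradicting $MR=G$. Therefore $K=G$.

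The routine parts here are the normality of $U\cap M$ and the identification of the chief factor $U/(U\cap M)\cong_G V$, which I would dispatch in a line each. The main obstacle is the last step: making precise and justifying that a maximal subgroup complementing a chief factor $G$-isomorphic to $V$ must contain $R_G(V)$. This is exactly the content of the crown machinery of Gasch\"utz and the characterisation of $R_G(V)$ developed in \cite{classes,Ga}; I would either cite it directly, or, if a self-contained argument is preferred, recall that $C_G(V)/R_G(V)$ already collects \emph{all} complemented chief factors $G$-isomorphic to $V$, so the extra complemented factor of type $V$ produced above cannot lie outside $R_G(V)$.
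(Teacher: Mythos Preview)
The paper does not give its own proof of this lemma; it simply quotes it from \cite[Proposition 11]{crowns}. So there is no in-paper argument to compare against, and the relevant question is just whether your proposed proof is sound. It is.

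Your reduction to a maximal subgroup $M\ge K$, the normality of $M\cap U$ via $U$ abelian and $MU=G$, the identification of the complemented chief factor $U/(U\cap M)\cong_G V$, and the passage to the primitive quotient $G/\Core_G(M)$ with socle $G$-isomorphic to $V$ are all correct and cleanly stated. The decisive step, as you rightly flag, is that $R_G(V)$ is contained in $\Core_G(M)$ for every maximal $M$ whose primitive quotient has socle $G$-isomorphic to $V$; this is indeed one of the equivalent descriptions of $R_G(V)$ in the crown theory (see \cite{classes}), and it is appropriate to cite it. Your alternative ``self-contained'' justification at the end (that $C_G(V)/R_G(V)$ already accounts for all complemented chief factors of type $V$) is morally right but, as written, is more of a heuristic than a proof: making it rigorous essentially amounts to re-deriving the characterisation you want to cite. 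I would keep the citation and drop the parenthetical alternative, or else spell the alternative out in full.
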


\begin{proof}[Proof of Theorem \ref{sol}] The proof is by induction on $|G|.$
Let $$\eta=\eta(G)=\sum_{A\in \mathcal V}\left \lceil  \frac{\delta_G(A)}{\dim_{\End_G(A)}A} \right \rceil.$$
If $\frat(G) \neq 1$, then by induction $d_I(G/\frat G)\leq\eta(G/\frat(G))$. As $$\eta(G/\frat(G))=\eta(G) \text{ and } d_I(G/\frat G))=d_I(G),$$
we immediately conclude $d_I(G)\leq \eta.$

 Assume now $\frat(G)=1.$ In this case, by Lemma \ref{corona}, there exists a crown $C/R$ of $G$ and
a normal subgroup $U$ of $G$ such that $C=R\times U.$
We have $R=R_G(V),$ $C=C_G(V)$ where $V \in \mathcal V$ is an irreducible $G$-module  and $U\cong_G V^\delta$ for $\delta=\delta_G(V).$ Moreover
$G/R\cong U \rtimes H$
where $H$ acts in the same say on each of the $\delta$ factors of $U$ and this action
is faithful and irreducible. Let $r=\dim_{\End_G(V)}V$ and $\alpha=\lceil \delta/r \rceil.$
 Set $\beta= \eta(G/U)$ and note that
$$ \eta= \beta +\alpha.$$
 By induction,
$d_I(G/U)\leq \eta(G/U)= \beta.$
 In particular $G$ contains $\beta$ elements
$x_1,\dots,x_\beta$ that invariably generate $G$ modulo $U.$
 Now set $x_{\beta+1}= \ldots= x_{\eta}=1$: of course the $\eta= \beta +\alpha$ elements
$x_1,\dots,x_\beta,x_{\beta+1},\dots,  x_{\eta} $ are still invariable generators of $G$.
 Clearly for every $i=\beta+1, \dots, \eta $, we have that
$ \dim_{\End_G(V)}C_V(x_i)= \dim_{\End_G(V)}V=r$, hence
$$\sum_{1\leq i\leq \eta}\dim_{\End_G(V)}C_V(x_i)\geq \sum_{\beta+1\leq i\leq \eta}\dim_{\End_G(V)}C_V(x_i)\geq \alpha\cdot r\geq \delta$$
and therefore, by Proposition \ref{matrici}, there exist $u_1,\dots,u_{\eta} \in U$ such that $x_1u_1,\dots,x_\eta u_\eta$ invariably generate
$G$ modulo $R$. Let $g_1,\dots,g_{\eta}\in G$ and consider the subgroup $K=\langle (x_1u_1)^{g_1},\dots,(x_\eta u_\eta)^{g_\eta}\rangle.$
By construction, $x_1,\dots,x_{\eta}$ invariably generate $G$ module $U$, hence $KU=\langle x_1^{g_1},\dots,x_\eta^{g_\eta}\rangle U=G.$
Since $x_1u_1,\dots,x_\eta u_\eta$ invariably generate $G$ modulo $R$, we have also $KR=G.$ We conclude from Lemma \ref{sotto} that $G=K$.
Hence $x_1u_1,\dots,x_\eta u_\eta$ invariably generate
$G.$
\end{proof}
%%%%%%%%%%%%%%%%%%%%%%%%%%%  SUPERSOLUBLE %%%%%%%%%%%%%%%%%%%

\section{Supersoluble groups}\label{Supersoluble}

We start this section with an easy observation.
\begin{prop}\label{nilpotent}A nilpotent group $G$ is coprimely-invariably generated if and only if it is cyclic.
\end{prop}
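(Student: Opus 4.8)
The plan is to prove the two implications separately, the substance lying in the forward direction. For the easy direction, suppose $G$ is cyclic of order $n=p_1^{a_1}\cdots p_t^{a_t}$. I would write $G$ as the internal direct product of its Sylow subgroups $\langle g_j\rangle$ with $|g_j|=p_j^{a_j}$. Since $G$ is abelian, every conjugate of $g_j$ equals $g_j$, so $\{g_1,\dots,g_t\}$ invariably generates $G$, and the orders $|g_1|,\dots,|g_t|$ are pairwise coprime prime powers. Hence $G$ is coprimely-invariably generated (in fact PCIG).

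For the converse, the first observation is that a CIG-set is in particular a generating set: specializing all conjugating elements to the identity, the defining invariable generating set $\{g_1,\dots,g_d\}$ already satisfies $\langle g_1,\dots,g_d\rangle=G$, with the orders $|g_1|,\dots,|g_d|$ pairwise coprime. Now I would exploit nilpotency: a finite nilpotent group is the internal direct product $G=P_1\times\cdots\times P_t$ of its Sylow subgroups, and for each prime $p_j$ the projection $\rho_j\colon G\to P_j$ is a surjective homomorphism sending each element to its $p_j$-part. Applying $\rho_j$ to the generating set yields $P_j=\langle \rho_j(g_1),\dots,\rho_j(g_d)\rangle$.

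The heart of the matter is then a counting observation on the orders. Fixing a prime $p_j$, since $|g_1|,\dots,|g_d|$ are pairwise coprime, at most one index $i$ can satisfy $p_j\mid |g_i|$; for every other index the $p_j$-part $\rho_j(g_i)$ is trivial. Therefore $P_j=\langle \rho_j(g_i)\rangle$ is generated by a single element, so each Sylow subgroup is cyclic, and being a direct product of cyclic groups of pairwise coprime orders, $G$ itself is cyclic. I do not expect a genuine obstacle here, since the argument is short; the only point to state carefully is the identification of $\rho_j(g_i)$ with the $p_j$-part of $g_i$ in the internal direct product decomposition. What is perhaps worth remarking is that the invariability hypothesis plays no essential role in this direction: in a nilpotent group every maximal subgroup is normal, so invariable generation coincides with ordinary generation, and the real content is simply that a nilpotent group admitting a generating set of pairwise coprime orders must be cyclic.
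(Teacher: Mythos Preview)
Your proof is correct, but it takes a genuinely different route from the paper's. The paper argues via the Frattini subgroup: since $G$ is nilpotent, $G/\frat(G)$ is abelian, and since CIG passes to quotients, $G/\frat(G)$ is an abelian CIG group, hence cyclic; then $G$ is cyclic because any preimage of a generator generates $G$ modulo $\frat(G)$ and therefore generates $G$. Your argument instead works directly with the Sylow decomposition $G=P_1\times\cdots\times P_t$ and the projections $\rho_j$, using the coprimality of the $|g_i|$ to force each $P_j$ to be cyclic. Your approach is slightly more elementary and has the advantage of making transparent exactly where the coprime-order hypothesis is used (and, as you note, that invariability is not really needed beyond ordinary generation, since maximal subgroups of a nilpotent group are normal). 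The paper's approach, on the other hand, fits the general philosophy used throughout the article of reducing questions to suitable quotients.
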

\begin{proof}Clearly an abelian group is CIG if and only if it is cyclic.
If $G$ is a nilpotent  CIG group, then $G/\frat(G)$ is abelian
 and CIG, hence $G/\frat(G)$, and consequently $G$, is cyclic.
\end{proof}

What happens if we replace nilpotency with supersolubility? How \lq\lq thin\rq\rq  \ is
a supersoluble CIG group?
 A first answer is given by Theorem \ref{super}, where it is proved that
 $\delta_G(V)=1$ for every complemented chief factor $V$ of $G$.

%\begin{prop}\label{super}Let $G$ be a finite supersoluble group.  Then $G$ is  coprimely-invariably generated
%if and only if no chief series of $G$ contains two complemented and $G$-isomorphic chief factors.
%\end{prop}

\begin{proof}[Proof of Theorem \ref{super}]
 Let $G$ be a finite supersoluble CIG group and assume by contradiction
that a chief series of $G$ contains two different complemented chief factors isomorphic to an irreducible
$G$-module $V$. Let $H=G/C_G(V)$. The semidirect product $V^2\rtimes H$ is an epimorphic image of $G$,
 hence
it is  CIG. It follows from Theorem \ref{abcase} that $2\leq \dim_{\End_H(V)}V,$ however
since $G$ is supersoluble, $V$ is cyclic and $\dim_{\End_H(V)}V=1,$ a contradiction.

 Conversely, assume that no chief series of $G$ contains two complemented and $G$-isomorphic chief factors. Let $p$ be the largest prime divisors of $G$: the Sylow $p$-subgroup $P$ of $G$ is normal.
By induction, a $p$-complement $H$ of $P$ in $G$ has a CIG-set, say $\{h_1,\dots,h_d\}$. Moreover
there exists $x$ in $G$ such that $\langle x \rangle^H=P.$ Indeed $P/\frat(P)\cong V_1\times \dots \times V_u$
where $|V_i|=p$ for each $i\in \{1,\dots,u\}$ and, by our hypothesis, $V_i$ and $V_j$ are not $H$-isomorphic
if $i\neq j$: this implies that $P/\frat(P)$ is a cyclic $H$-module. It can be easily verified that
$\{x,h_1,\dots,h_d\}$ is a CIG-set of $G.$
\end{proof}

 As a corollary, we have a bound on the minimal number of generators of the Sylow $p$-subgroups.

\begin{cor}Let $G$ be a  supersoluble and coprimely-invariably generated finite  group. Then for
any prime $p$ dividing $|G|,$ a Sylow $p$-subgroup of $G$ can be generated by $p-1$ elements.
\end{cor}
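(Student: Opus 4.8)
The plan is to extract a bound on the minimal number of generators of a Sylow $p$-subgroup directly from the combinatorial condition established in Theorem~\ref{super}. Fix a prime $p$ dividing $|G|$ and let $P$ be a Sylow $p$-subgroup of $G$. The minimal number of generators $d(P)$ equals $\dim_{\FF_p}(P/\frat(P))$, so it suffices to bound the number of composition factors in the abelian quotient $P/\frat(P)$, viewed as a module. First I would pass to a setting where the action is irreducible: the chief factors of $G$ lying inside $P$ (equivalently, the $G$-composition factors of $P/\frat(P)$) are the relevant $p$-groups $V$ appearing as complemented chief factors of $G$. Since $G$ is supersoluble, each such $V$ is cyclic of order $p$, and by Theorem~\ref{super} the condition that $G$ is CIG forces $\delta_G(V)=1$ for every such $V$; that is, no two of these chief factors are $G$-isomorphic.

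Next I would count how many pairwise non-$G$-isomorphic one-dimensional $\FF_p$-modules can occur. Let $H=G/C_G(V)$ act on a one-dimensional factor $V$; because $V$ is cyclic of order $p$, the action is given by a homomorphism $H \to \aut(V)\cong \FF_p^\times$, a cyclic group of order $p-1$. The $G$-isomorphism class of such a factor is determined precisely by this scalar character $G \to \FF_p^\times$. The number of distinct such characters is therefore at most the number of distinct scalars available, namely $|\FF_p^\times| = p-1$. Since Theorem~\ref{super} guarantees that the complemented chief factors isomorphic to a given $V$ occur with multiplicity one, the total number of $G$-composition factors of $P$ that are complemented is bounded by the number of distinct characters, hence by $p-1$.

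The remaining point, and the step I expect to require the most care, is to control the \emph{non-complemented} chief factors and the role of the Frattini subgroup: a priori $P/\frat(P)$ could acquire extra generators from factors that are not complemented in $G$. Here I would use supersolubility together with the normality of $P$. Because $G$ is CIG, Proposition~\ref{nilpotent} (applied after restricting attention to nilpotent sections) and the crown-reduction machinery show that only the complemented factors contribute to $P/\frat(P)$: every generator of $P$ modulo its Frattini subgroup corresponds to a complemented chief factor, as a non-complemented factor would lie inside $\frat(G)$ and hence inside $\frat(P)$. Thus $d(P)=\dim_{\FF_p}(P/\frat(P))$ equals exactly the number of distinct scalar characters $G\to\FF_p^\times$ realized on $P$, which we have just bounded by $p-1$.

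Putting these together gives $d(P)\le p-1$, as claimed. The cleanest route, should the Frattini bookkeeping prove delicate, is to reduce at once to the case $\frat(G)=1$ (so that $\frat(P)=1$ and $P$ is elementary abelian), prove the bound there via the character count above, and then observe that the minimal number of generators of a Sylow subgroup is unchanged under passing to $G/\frat(G)$.
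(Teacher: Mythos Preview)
Your overall strategy matches the paper's: use Theorem~\ref{super} to ensure that the cyclic chief factors inside the Sylow $p$-section are pairwise non-$G$-isomorphic, and then bound the number of distinct one-dimensional $\FF_p$-characters of $G$. However, the sentence ``The number of distinct such characters is therefore at most the number of distinct scalars available, namely $|\FF_p^\times|=p-1$'' is a genuine gap. A character is a homomorphism $G\to\FF_p^\times$, and the number of these is $|\mathrm{Hom}(G^{\mathrm{ab}},C_{p-1})|$, which in general can be far larger than $p-1$: for instance, if $G$ has an abelian quotient isomorphic to $C_{p-1}\times C_{p-1}$ there are $(p-1)^2$ such characters. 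So the character count, as written, does not give the bound.

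The missing ingredient is exactly where the CIG hypothesis must be used a second time. The paper passes to the Sylow tower, sets $V=P/\frat(P)$ (for the normal Sylow $p$-subgroup in the appropriate quotient), and observes that $G/C_G(V)$ embeds in a product of copies of $\FF_p^\times$, hence is abelian; being a quotient of the CIG group $G$, Proposition~\ref{nilpotent} then forces $G/C_G(V)$ to be \emph{cyclic}. Only now does the count go through: a cyclic group has at most $p-1$ distinct one-dimensional $\FF_p$-representations, and since by Theorem~\ref{super} the factors of $V$ are pairwise non-isomorphic, $d(P)=\dim_{\FF_p}V\le p-1$. You invoke Proposition~\ref{nilpotent} only for the Frattini bookkeeping, but it is really needed at the counting step; once you insert it there, your Frattini worries also dissolve, since the chief factors appearing in $P/\frat(P)$ are automatically complemented in the relevant Sylow-tower quotient.
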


\begin{proof}
If $p_1<p_2<\dots<p_t$ are the prime divisors of $|G|$, then
there exists a normal series $1=N_t<\dots < N_0=G$ with the property that
$N_{i-1}/N_i$ is a Sylow $p_i$-subgroup of $G/N_i.$ Let $P_i=N_{i-1}/N_i$
and let $V_i=P_i/\frat{P_i}.$ Since $G$ is supersoluble, $V_i=U_{i,1}\times \dots \times U_{i,t_i},$
with $|U_{i,j}|=p_j$ for each $j\in\{1,\dots,t_i\};$ moreover, by Proposition \ref{super},
$U_{i,j_1}$ and $U_{i,j_2}$ are not $G$-isomorphic if $j_1\neq j_2.$
In particular $G/C_G(V_i)$
is abelian, and consequently cyclic by Proposition \ref{nilpotent}.
However a cyclic group has at most $p_i-1$ 1-dimensional inequivalent representations
over the field with $p_i$-elements,   so $d(P_i)=d(V_i)=t_i\leq p_i-1.$
\end{proof}

In the remaining part of this section we will prove that
the previous result is best possible: for any prime $p$ it
can be constructed a  supersoluble coprimely-invariably generated finite  $G$ with the property that $d(P)=p-1$ for every Sylow $p$-subgroup $P$
 of $G.$
Notice that this group is actually minimal-exponent.

\

In order to construct these examples we need to recall
some properties of the Sylow $p$-subgroup $P_m$ of $\perm(p^m)$
and its normalizer $N_m$ in $\perm(p^m).$
We have that $P_m$ is isomorphic to the iterated
wreath product $C_p\wr \dots\wr  C_p$ of $m$ copies of the cyclic group of order $p.$
In particular $P_m\cong B_m\rtimes P_{m-1}$ with $B_m\cong (\mathbb Z/p\mathbb Z)^{p^{m-1}}.$
Notice that $$I_m=\left\{(x_1,\dots,x_{p^{m-1}}) \in B_m \mid \sum_i x_i=0\right\}$$ is the
unique maximal $P_{m-1}$-submodule of $B_m.$ Let $y_m=(1,0,\dots,0) \in B_m.$
Clearly $B_m=I_m \times \langle y_m \rangle$. Moreover
$F_m=I_mI_{m-1}\cdots I_1$ is the Frattini subgroup of $P_m$
and $\gamma_1,\dots,\gamma_m$ is a basis for $P_m/F_m\cong C_p^m$, where $\gamma_i=y_iF_m.$
Let $N_m$ be the normalizer of $P_m$ in $\perm(p^m).$ We have that $N_m$ is a split extension
of $P_m$ with a direct product $H_m$ of $m$ copies of the cyclic group of order $p-1$ (see for example \cite{cl}).
From the description of  the action of $H_m$ over $P_m$ given in \cite{bg} before the statement of Proposition 2.4
it follows that for every $(\beta_1,\dots,\beta_m)\in (\mathbb Z/p\mathbb Z)^*,$ there exists $h \in H$ such that
$\gamma_i^h=\beta_i\gamma_i$ for every $1\leq i \leq m.$ In particular $B_m^h=B_m,$ so $h$ induces an automorphism
$\overline h$ of $P_m/B_m\cong P_{m-1}.$

\begin{lemma}Let $m\leq p-1$ and let $h\in H_m$ such that
$\gamma_1^h=\beta_1\gamma_1,\dots,\gamma_m^h=\beta_m\gamma_m$
with $\beta_i\neq \beta_j$ whenever $i\neq j.$
 Then the group $G=P_m\rtimes \langle h \rangle$ is minimal-exponent.
\end{lemma}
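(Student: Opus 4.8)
The plan is to show directly that every subgroup $K\le G$ with $\exp(K)=\exp(G)$ must equal $G$. First I would record that, since $P_m$ is the normal Sylow $p$-subgroup of the soluble group $G$ and $\langle h\rangle$ is a cyclic Hall $p'$-subgroup with $(|h|,p)=1$, the exponent splits as $\exp(G)=p^m|h|$, where $p^m=\exp(P_m)$. Because conjugation preserves both the hypothesis $\exp(K)=\exp(G)$ and the desired conclusion $K=G$, I may replace $K$ by a conjugate. A Hall $p'$-subgroup $L$ of $K$ has $\exp(L)$ equal to the $p'$-part $|h|$ of $\exp(K)$; by Hall's theorem $L$ lies in some conjugate of $\langle h\rangle$, and being cyclic of exponent $|h|$ it equals that conjugate. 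After conjugating I may therefore assume $\langle h\rangle\le K$. Since $P_m$ is the unique Sylow $p$-subgroup of $G$, the group $Q:=K\cap P_m$ is the normal Sylow $p$-subgroup of $K$, it is $\langle h\rangle$-invariant, $K=Q\rtimes\langle h\rangle$, and $\exp(Q)$ is the $p$-part $p^m$ of $\exp(K)$.

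The heart of the matter is then the claim that an $\langle h\rangle$-invariant subgroup $Q\le P_m$ with $\exp(Q)=p^m$ is all of $P_m$. To prove it I would first pin down which elements of $P_m$ have maximal order. Writing $\ol g=\sum_i c_i\gamma_i$ for the image of $g\in P_m$ in $P_m/F_m\cong C_p^m$, I claim
\[
|g|=p^m \iff c_i\neq 0 \text{ for all } i .
\]
This I would prove by induction on $m$ along the decomposition $P_m=B_m\rtimes P_{m-1}$. If $g=(b,w)\in B_m\rtimes P_{m-1}$ and $p^s=|w|$, then $g^{p^s}=\big(\sum_{j=0}^{p^s-1}b^{w^j},1\big)$ lies in $B_m$, so $|g|=p^{s+1}$ exactly when this norm is nonzero and $|g|=p^s$ otherwise. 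Hence $|g|=p^m$ forces $s=m-1$, so by induction $c_1,\dots,c_{m-1}\neq 0$ and $w$ is a single $p^{m-1}$-cycle on the $p^{m-1}$ points (a $p$-element whose order equals the number of points it moves is a full cycle); the norm then equals the total coordinate sum of $b$, which is nonzero precisely when $c_m\neq 0$, since $B_m=I_m\times\langle y_m\rangle$ and $I_m$ is the zero-sum submodule.

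With this in hand the rest is short. As $\exp(Q)=p^m$, there is $g\in Q$ with $|g|=p^m$, so its image $\ol g=\sum_i c_i\gamma_i$ has every $c_i\neq 0$. Since $Q$ is $\langle h\rangle$-invariant and $F_m$ is characteristic, $\ol Q$ is an $h$-invariant subspace of $P_m/F_m$ containing the vectors $\ol g^{\,h^{k}}=\sum_i c_i\beta_i^{\,k}\gamma_i$ for $k=0,\dots,m-1$. Their coefficient matrix is $\diag(c_1,\dots,c_m)$ times the Vandermonde matrix of the pairwise distinct $\beta_1,\dots,\beta_m$, hence invertible; so these $m$ vectors span $P_m/F_m$ and $\ol Q=P_m/F_m$. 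Because $F_m=\frat(P_m)$, this forces $Q=P_m$, whence $K=P_m\langle h\rangle=G$, as required. Note that the hypothesis $m\le p-1$ is exactly what lets the $m$ eigenvalues $\beta_i$ be distinct in $\FF_p^\times$. The main obstacle is the order characterization in the second paragraph: the surrounding steps are formal (Hall's theorem, a Vandermonde determinant, and the Frattini argument), but that step rests on the inductive norm computation in the wreath product and is where the structure of $P_m$ genuinely enters.
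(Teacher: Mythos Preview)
Your proof is correct. The reduction to $\langle h\rangle\le K$ and $Q=K\cap P_m$ matches the paper's, but from there the two arguments are organised differently.

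The paper inducts on $m$ at the level of the whole lemma: it passes to $G/B_m\cong P_{m-1}\rtimes\langle\bar h\rangle$, and the inductive hypothesis gives $QB_m=P_m$. Then it only needs to show that an element of order $p^m$ in $Q$ has a single nonzero Frattini coordinate (the one corresponding to $\gamma_m$), and invokes a separate linear-algebra lemma (the paper's Lemma~\ref{elinear}) about diagonal actions with distinct eigenvalues to deduce $\gamma_m\in QF_m/F_m$; combined with $QB_m=P_m$ this yields $Q=P_m$.

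You instead isolate and prove, by the same wreath-product norm computation, the sharper intermediate statement that $|g|=p^m$ if and only if \emph{all} coordinates of $\bar g$ in $P_m/F_m$ are nonzero. With this in hand, the Vandermonde argument immediately shows that the $\langle h\rangle$-orbit of $\bar g$ spans $P_m/F_m$, so $QF_m=P_m$ and $Q=P_m$ by the Frattini property, bypassing both the inductive step $QB_m=P_m$ and the auxiliary Lemma~\ref{elinear}. Your route is more self-contained and yields a cleaner one-step conclusion; the paper's route trades this directness for a reusable lemma about invariant subspaces of diagonalisable operators.
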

\begin{proof}
Let $\alpha=|h|.$ Since $\exp(P_m)=p^m,$ we have that $\exp(G)=p^m\alpha.$ Let $H\leq G$ with $\exp(H) = \exp(G).$
In particular $H$ must contain an element of order $\alpha:$ the subgroups of $G$ with order $\alpha$ are conjugate, so it is not
restrictive to assume $h \in H.$ Let $Q=P_m\cap H$ and consider $B=B_m \cap Q.$ Since $Q$ contains an element
of order $p^m$, the factor group $Q/B$ must contains an element of order $p^{m-1}$. Let $\bar h$ be the automorphism
group of $P_m/B_m$ induces by $h$. We have that $\overline H = QB_m/B_m\rtimes \langle \overline h \rangle$ is a subgroup of $\overline G= P_m/B_m\rtimes\langle \overline h \rangle$ with $\exp(\overline H)=\exp(\overline G).$ By induction   $\overline H=\overline G$, hence $QB_m=P_m.$ Let now
$x$ be a $p^m$-cycle contained in $Q.$ We can write $x=by$ with $b=(u_1,\dots,u_{p^{m-1}})\in B_m$ and
$y$ a $p^{m-1}$-cycle in $P_{m-1}.$ Since $|x|=p^m$ we must have $(by)^{p^{m-1}}\neq (0,\dots,0)$
and this implies $\sum_i u_i \neq 0$ i.e. $b \notin I_m.$ This means that $xF_m=x_1\gamma_1+\dots+ x_m\gamma_m$
with $x_1\neq 0.$ Applying the subsequent Lemma \ref{elinear} to the $\langle h \rangle$-invariant subspace
$W=QF_m/F_m$ of $V=P_m/F_m$, we deduce that $\gamma_m\in QF_m/F_m$: hence $P_m=QB_m=QF_m=Q$ and $H=G.$
\end{proof}

\begin{cor}\label{12} Let $p$ be an odd prime. The $p$-group $P_{p-1}$ admits an automorphism $h$ of order $p-1$
with the property that $G=P_{p-1}\rtimes \langle h \rangle$
 is a minimal-exponent group.
This group $G$ is supersoluble with exponent $(p-1)p^{p-1}$ and order $(p-1)p^{\frac{p^{p-1}-1}{p-1}}$.
\end{cor}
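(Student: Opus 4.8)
The plan is to apply the preceding Lemma with $m=p-1$. The hypothesis $m\le p-1$ of that Lemma is then satisfied with equality, and this is the crucial (and tight) point of the whole construction: the multiplicative group $(\mathbb{Z}/p\mathbb{Z})^*$ has exactly $p-1$ elements, so I can take $\beta_1,\dots,\beta_{p-1}$ to be precisely the $p-1$ distinct nonzero residues modulo $p$. Being pairwise distinct, the element $h\in H_{p-1}$ with $\gamma_i^h=\beta_i\gamma_i$ for $1\le i\le p-1$ meets the hypotheses of the Lemma, so that $G=P_{p-1}\rtimes\langle h\rangle$ is minimal-exponent. Since $m=p-1$ is the largest index for which this many distinct scalars are available, the example is as large as the method permits.

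Next I would pin down the order of $h$. As $H_{p-1}$ is a direct product of $p-1$ copies of the cyclic group of order $p-1$ and $h$ acts on $\gamma_i$ by the scalar $\beta_i$, the order of $h$ is the least common multiple of the multiplicative orders of $\beta_1,\dots,\beta_{p-1}$. Since these scalars exhaust $(\mathbb{Z}/p\mathbb{Z})^*$, at least one of them is a primitive root modulo $p$, hence of multiplicative order $p-1$; therefore $|h|=p-1$, as claimed.

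For supersolubility I would show that every chief factor of $G$ is cyclic of prime order. The factors inside the complement $\langle h\rangle$ are cyclic because $\langle h\rangle$ is. If $W$ is a chief factor lying inside the normal $p$-subgroup $P_{p-1}$, then $W$ is an irreducible $\mathbb{F}_p[G]$-module and $P_{p-1}C_G(W)/C_G(W)$ is a normal $p$-subgroup of the group $G/C_G(W)$ acting faithfully and irreducibly on $W$; since a nontrivial normal $p$-subgroup must fix a nonzero vector and the fixed space is a submodule, irreducibility forces $P_{p-1}$ to act trivially on $W$. Thus the $G$-action on $W$ factors through $\langle h\rangle$, a cyclic group of order $p-1$, and because $p\nmid(p-1)$ with all $(p-1)$-th roots of unity already in $\mathbb{F}_p$, every irreducible $\mathbb{F}_p[\langle h\rangle]$-module is one-dimensional. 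Hence $|W|=p$, and $G$ is supersoluble.

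The numerical claims are then routine. As in the Lemma, $\exp(P_{p-1})=p^{p-1}$ is coprime to $|h|=p-1$, so $\exp(G)=(p-1)p^{p-1}$. For the order, Legendre's formula gives the exponent of $p$ in $(p^{p-1})!$ as $\sum_{i\ge 1}\lfloor p^{p-1}/p^i\rfloor=1+p+\cdots+p^{p-2}=(p^{p-1}-1)/(p-1)$, whence $|P_{p-1}|=p^{(p^{p-1}-1)/(p-1)}$ and $|G|=|P_{p-1}|\,|h|=(p-1)\,p^{(p^{p-1}-1)/(p-1)}$. The only genuinely substantial input is the minimal-exponent conclusion, which is already delivered by the preceding Lemma; I expect every remaining step to be a short verification rather than an obstacle.
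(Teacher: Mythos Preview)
Your proposal is correct and follows precisely the approach the paper intends: the corollary is stated in the paper without proof, as an immediate consequence of the preceding Lemma applied with $m=p-1$, and you have filled in exactly the details (choice of the $\beta_i$ as all of $(\mathbb{Z}/p\mathbb{Z})^*$, determination of $|h|$, supersolubility, and the order/exponent computations) that the paper leaves to the reader. There is nothing to add or correct.
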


\begin{lemma}\label{elinear}Let $V=F^n$ be an $n$-dimensional vector space over the field $F$ and let $\alpha=\diag(\beta_1,\dots,\beta_n).$
Suppose that $\beta_i \neq 0$ for each $i\in \{1,\dots,n\}$ and that
$\beta_i \neq \beta_j$ whenever $i\neq j.$ Suppose that an $\alpha$-invariant
subspace $W$ of $V$ contains an element $v=(x_1,\dots,x_n)$ with $x_1\neq 0.$
Then $(1,0,\dots,0)\in W.$
\end{lemma}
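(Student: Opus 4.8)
The plan is to exploit the fact that $\alpha$ is diagonal with \emph{distinct} diagonal entries, so that the standard basis vectors $e_1,\dots,e_n$ of $V=F^n$ are eigenvectors of $\alpha$ affording the pairwise distinct eigenvalues $\beta_1,\dots,\beta_n$. Since $W$ is $\alpha$-invariant, it is automatically invariant under every polynomial $f(\alpha)$ with $f\in F[t]$; the strategy is therefore to choose $f$ so that $f(\alpha)$ is precisely the coordinate projection of $V$ onto the line $Fe_1$, and then apply $f(\alpha)$ to the given vector $v$.

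Concretely, first I would set
$$f(t)=\prod_{j=2}^{n}\frac{t-\beta_j}{\beta_1-\beta_j}\in F[t],$$
which is well defined because the $\beta_i$ are pairwise distinct, so each denominator is a nonzero element of the field $F$. By Lagrange interpolation one has $f(\beta_1)=1$ and $f(\beta_i)=0$ for $i\geq 2$, and since $\alpha=\diag(\beta_1,\dots,\beta_n)$ this gives $f(\alpha)=\diag(f(\beta_1),\dots,f(\beta_n))=\diag(1,0,\dots,0)$, i.e.\ $f(\alpha)$ is exactly the projection onto $Fe_1$ along the other coordinate axes.

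Then I would apply this to $v=(x_1,\dots,x_n)\in W$: invariance of $W$ under $\alpha$ gives $f(\alpha)v\in W$, while the explicit form of $f(\alpha)$ gives $f(\alpha)v=(x_1,0,\dots,0)=x_1e_1$. Since $x_1\neq 0$ and $F$ is a field, dividing by $x_1$ yields $e_1=(1,0,\dots,0)\in W$, as required. Note that the hypothesis $\beta_i\neq 0$ is not actually needed for this conclusion; only the distinctness of the $\beta_i$ and the fact that $F$ is a field are used.

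I do not expect a genuine obstacle here: the only delicate point is the existence of the interpolating polynomial, which is exactly where distinctness of the eigenvalues enters, together with the elementary observation that an $\alpha$-invariant subspace is closed under polynomials in $\alpha$. More generally the same projection argument shows that every $\alpha$-invariant subspace of $V$ is spanned by those $e_i$ it contains nontrivially, hence is a coordinate subspace; but for the present statement only the single projection onto $Fe_1$ is needed.
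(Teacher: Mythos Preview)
Your proof is correct. The paper takes a different, more hands-on route: it fixes an element $\omega=(z_1,\dots,z_n)\in W$ with $z_1\neq 0$ and with the fewest possible nonzero coordinates among indices $i\neq 1$, and then, assuming some such coordinate $z_i\neq 0$ survives, forms $\bar\omega=\beta_i\omega-\omega^\alpha=(\beta_i I-\alpha)\omega\in W$; this kills the $i$th coordinate while keeping the first coordinate $(\beta_i-\beta_1)z_1$ nonzero, contradicting minimality. In effect the paper is building your Lagrange polynomial one linear factor at a time via a minimality argument, whereas you write down the projection $f(\alpha)$ in closed form and apply it once. Your version is slicker and makes transparent the general fact that every $\alpha$-invariant subspace is a coordinate subspace; the paper's version avoids any explicit appeal to interpolation and stays entirely elementary. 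Your observation that the hypothesis $\beta_i\neq 0$ is superfluous is also correct; that assumption is presumably there only because it holds in the application (where $\alpha$ comes from an automorphism).
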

\begin{proof}Let $\Omega:=\{(y_1,\dots,y_m)\in W \mid y_1 \neq 0\}$ and assume that $\omega=(z_1,\dots,z_n)$ is
an element of $\Omega$ with the property that the cardinality of the set $J_\omega=\{i\neq 1\mid z_i \neq 0\}$ is as
smallest as possible. Assume by contradiction that $J_\omega\neq \emptyset$ and choose $i\in J_\omega.$ We have
$$\overline \omega=\beta_i\omega-\omega^\alpha=(\beta_iz_1,\dots,\beta_iz_n)-(\beta_1z_1,\dots,\beta_nz_n)\in W.$$
Since $\beta_i-\beta_1\neq 0,$ we have $\overline \omega\in \Omega.$ Moreover $J_{\overline{\omega}} \subseteq J_\omega$
but $i \in  J_\omega \setminus J_{\overline{\omega}}$, against the minimality of $|J_\omega|.$
\end{proof}

%%%%%%%%%%%%%%%%%%%%%%%%%%%% MIN EXP %%%%%%%%%%%%%%%%

%\section{Group with no subgroup with the same exponent}\label{min-exponent}

%%%%%%%%%%%%%%%%%%%%%%%%%%%   {Homomorphic images of PCIG  %%%%%%%%%%%%%%%%

\section{Further examples and some questions}\label{quest}

 In this section we prove that all the finite simple groups are PCIG and we make some further considerations   on the differences among  the properties  PCIG and CIG and the property
 of being minimal-exponent.

In \cite{ig} the authors prove that every non-abelian finite simple group $G$ is invariably generated by 2 elements;
for many simple groups, the two generators exhibited in \cite{ig} have coprime orders. Examining the remaining cases, in \cite{CIG} it is proved
that a finite nonabelian simple groups $G$ contains four elements of pairwise coprime order
invariably generating $G$ (in fact three element suffices if $G\neq \po^+_8(2),  \po^+_8(3)$).

\begin{thm}\label{simple} All the finite simple groups satisfies PCIG.
\end{thm}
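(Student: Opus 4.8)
The plan is to prove Theorem \ref{simple} using the classification of finite simple groups, showing case-by-case that each finite nonabelian simple group $G$ admits a PCIG-set, that is, an invariable generating set consisting of elements whose orders are pairwise coprime \emph{prime-powers}. The starting point is the result from \cite{CIG} already quoted in the excerpt: every finite nonabelian simple group contains three (or four, in the two exceptional $\po^+_8$ cases) elements of pairwise coprime order that invariably generate $G$. The task is therefore to upgrade ``coprime order'' to ``coprime prime-power order''. My first observation is that if $\{g_1,\dots,g_d\}$ is a CIG-set and some $g_i$ has order divisible by several primes, one would like to replace $g_i$ by a suitable power of prime-power order without destroying invariable generation; but in a simple group this cannot be done by the elementary soluble-group argument of Theorem \ref{CIG=PCIG}, so the replacement must be justified directly in each case.

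The approach I would take is to exhibit, for each family of simple groups, explicit elements of prime-power order whose coprimality and invariable-generation properties can be read off from the subgroup structure. The key technique is the standard one for proving invariable generation in simple groups: a set $\{g_1,\dots,g_d\}$ invariably generates $G$ if and only if no maximal subgroup $M$ of $G$ contains a conjugate of each $g_i$ simultaneously, equivalently no single maximal subgroup meets all the conjugacy classes $g_1^G,\dots,g_d^G$. Thus I would, for each simple group, choose elements of prime-power orders lying in conjugacy classes with the property that no maximal subgroup intersects all of them. For the alternating groups $\alt(n)$ one chooses elements supported on cycle types realizing a large prime $p$ (so a $p$-cycle forces transitivity/primitivity) together with elements of prime-power order ruling out the remaining primitive and imprimitive maximal subgroups, invoking results on primes in intervals (Bertrand--type) to guarantee the existence of the needed prime-power-order elements. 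For the groups of Lie type one uses elements of prime-power order generating (or lying in) maximal tori associated to primitive prime divisors $\Phi_k(q)$ of $q^k-1$ (Zsigmondy primes), whose normalizers and the maximal subgroups containing them are tightly controlled; regular semisimple elements and unipotent elements of prime-power order are the natural candidates.

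The key technical input I would rely on is the detailed knowledge of maximal subgroups and of element orders: for classical groups, Aschbacher's subgroup-structure theorem together with tables of maximal tori and Zsigmondy-prime data; for exceptional and sporadic groups, explicit character-table and maximal-subgroup data (from the Atlas and from the literature on invariable and random generation, e.g. \cite{ig}). The point is that the coprime generators produced in \cite{CIG} are typically already semisimple or unipotent elements whose orders are products of a prime-power torus contribution, and one checks that the prime-power part already suffices, or replaces the element by a power of prime-power order lying in the same maximal torus so that the same ``no common maximal overgroup'' argument still applies.

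The main obstacle I expect is twofold. First, the small and sporadic cases, and the two exceptional orthogonal groups $\po^+_8(2)$ and $\po^+_8(3)$ requiring four generators, must be handled by direct computation with explicit element orders and maximal subgroup lists, since the generic Zsigmondy/torus arguments break down for small $q$ (Zsigmondy primes may fail to exist, and there are extra maximal subgroups). Second, and more delicate, is ensuring that the replacement of a coprime generator by one of \emph{prime-power} order preserves the crucial property that no maximal subgroup contains conjugates of all chosen elements: shrinking an element to a prime-power power enlarges the set of maximal subgroups potentially containing it, so one must verify in each family that enough primitive prime divisors are available to separate the classes. This bookkeeping across all families of Lie type, uniformly in $q$ and the Lie rank, is where the bulk of the work lies, and is the step I expect to be the true core of the proof.
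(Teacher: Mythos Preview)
Your outline is a plausible strategy, and for the alternating groups it is essentially what the paper does (a $p$-cycle with $n/2<p<n-2$ plus prime-power elements of each type dividing $n$). For the remaining simple groups, however, the paper takes a much shorter route that bypasses almost all of the ``bookkeeping across all families of Lie type'' you anticipate.

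The key observation you are missing is this: if $G$ is a nonabelian simple group and \emph{no} proper subgroup $M$ of $G$ satisfies $\pi(M)=\pi(G)$, then \emph{any} set $\Omega_G=\{x_p : p\in\pi(G)\}$ with $x_p$ a nontrivial $p$-element is automatically a PCIG-set. Indeed, if conjugates of all the $x_p$ lay in a common proper subgroup $M$, then every prime in $\pi(G)$ would divide $|M|$, contradicting $\pi(M)\ne\pi(G)$. So the problem reduces to the finite list of pairs $(G,M)$ with $\pi(G)=\pi(M)$, which is exactly what is tabulated in \cite[Table 10.7]{107} (Liebeck--Praeger--Saxl). For the simple groups not appearing there, there is literally nothing to check. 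For those that do appear, one only has to pick, for one or two primes $p$, a $p$-element $x_p$ whose class avoids the finitely many $\aut(G)$-classes of offending subgroups $M$; this is supplied by Giudici's result \cite{Gi} in almost every case, and by a handful of explicit element-order checks in $\psl_6(2)$, $\psp_6(2)$, $\M_{12}$, $\po^+_8(2)$, $\po^+_8(3)$.

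So your Zsigmondy/maximal-torus/Aschbacher programme would eventually succeed, but it attacks each Lie family separately and requires controlling all maximal overgroups of the chosen elements uniformly in $q$ and rank. The paper's approach replaces that with a single global reduction to a short table, turning the ``true core of the proof'' you expected into a few lines.
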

\begin{proof}Let $G$ be a finite simple groups. Clearly if $G$ is cyclic of order $p,$ then $G$ is PCIG, so we may assume that $G$ is non abelian.
First consider the case $G=\alt(n)$. If $n\geq 7,$ then there exists a prime $p$ with $n/2 < p < n-2$
(e.g. by Nagura's result \cite{na}). Write $n=p_1^{n_1}\dots p_r^{n_r}$
as a product of powers of different primes  and consider
the following elements: $y,$ $x_1,\dots,x_r$ where $y$ has order $p$,
$x_i$ is the product of $n/p_i^{n_i}$ disjoint cycles of length $p_i^{n_i}$ if $p_i$ is odd,
$x_i$ is the product of $n/2^{n_i-1}$ disjoint cycles of length $2_i^{n_i-1}$ if $p_i=2.$
Let now $a,b_1,\dots,b_r \in \alt(n)$ and consider $H=\langle y^a,x_1^{b_1},\dots,x_r^{b_r}\rangle$ and let $\Delta$ be an $H$-orbit: we have
that $|x_i|$ divides $|\Delta|$ for each $i\in \{1,\dots r\}$ and $|\Delta|\geq p>n/2,$ hence $|\Delta|=n$. This implies
that $G$ is transitive (and consequently primitive since it contains a $p$-cycle): but
then $H=\alt(n)$ by \cite[Theorem 3.3E]{dm}, hence $y,$ $x_1,\dots,x_r$ invariably generate $\alt(n).$
If $n\in \{5,6\}$ then $\alt(n)$ is invariably generated by any subset $\{x_1,x_2,x_3\}$ with $x_1$ or order 5, $x_2$ of order 3 and $x_3$ of order $2$ if $n=5$, of order $4$ if $n=6$. To deal with the other simple groups we use \cite[Corollary 5]{107}: all the pairs $(G,M)$
where $G$ is a finite non abelian simple group and $M$ is a proper subgroup of $G$ with $\pi(G)=\pi(M)$ are listed in \cite[Table 10.7]{107}.
Now, for a given simple group $G$, consider a set $\Omega_G=\{x_p \mid p\in \pi(G)\},$ where $x_p$ is a nontrivial $p$-element of $G$.
If  $G$ does not appear in \cite[Table 10.7]{107}, then $\Omega_G$ is a PCIG-set of $G$, independently of
our choice of the elements $x_p.$ Let $\mathcal S$ be the set of simple groups that are not alternating groups and contain a proper subgroup $M$ with $\pi(G)=\pi(M).$ By \cite[Table 10.7]{107} if $G\in \mathcal S \setminus \{\po^+_8(2),\psl_6(2),\psp_6(2),\M_{12}\},$ then, up to conjugacy in $\aut(G),$ there exists a unique proper subgroup $M$ of $G$ with
$\pi(G)=\pi(M)$; moreover by \cite[Theorem 1.3]{Gi}, except when $G=\po^+_8(3),$ there exist a prime $p$
and a $p$-element $g\in G$ such that $g\notin \cup_{\phi \in \aut(G)}M^\phi$ (indeed $g$ can be chosen of order $p$
is $G\neq \M_{11}$, of order 8 otherwise). If $x_p=g, $ then $\Omega_G$ is a PCIG-set for $G.$
If $G\in \{\psl_6(2), \psp_6(2)\}$, then $G$ contains an element $G$ of order 9 while no proper subgroup $M$ of $G$ with
$\pi(G)=\pi(M)$ contains element of order 9: if we choose $x_3$ of order 9, then $\Omega_G$ is a PCIG-set.
The group $G=\M_{12}$ contains, up to conjugacy in $\aut(G)$, two subgroups $M$ with $\pi(G)=\pi(M)$:
$M_1 \cong \psl(2,11)$ and  $M_2 \cong \M_{11};$ on the other hand $G$ contains $g_1$ or order 2
and $g_2$ of order 3 with the property that $g_i\notin \cup_{\phi \in \aut(G)}M_i^\phi$ for $i=1,2.$
If we choose $x_2=g_1$ and $x_3=g_2$, then $\Omega_G$ is a PCIG-set.
The cases $G =  \po^+_8(2),  \po^+_8(3)$ are discussed in \cite{CIG}: they are invariably generated
by four elements of orders, respectively, 2,5,7,9 and 5,7,9,13.
\end{proof}

As remarked in the introduction, if $G$ is a  minimal-exponent group, then $G$ is PCIG, but the converse does not hold.
Moreover, it is easy to see that epimomorphic images of  PCIG groups  (or CIG groups) are PCIG  (CIG, respectively).
 Actually,  the converse holds when we consider quotients over Frattini subgroups.

\begin{lemma}A finite group $G$ is PCIG (CIG) if and only if $G/\frat(G)$ is PCIG (CIG).
\end{lemma}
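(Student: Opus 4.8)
The plan is to prove both directions of the equivalence, relying on the fact that $\frat(G)$ consists of ``non-generators'' and that conjugation by elements of $G$ interacts well with the Frattini quotient. The forward direction is the easy one: since PCIG and CIG are inherited by epimorphic images (as noted in the excerpt just before the statement), if $G$ is PCIG (CIG) then so is the quotient $G/\frat(G)$. So the content is entirely in the reverse direction, where I must lift a PCIG-set (CIG-set) of $\ol{G}:=G/\frat(G)$ to one of $G$.

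For the reverse direction, suppose $\{\ol{g}_1,\dots,\ol{g}_d\}$ is a PCIG-set of $\ol{G}$, where $\ol{g}_i$ denotes the image of $g_i\in G$. First I would arrange that the orders are preserved under the quotient map. Since $\frat(G)$ is nilpotent, I can choose each $g_i$ so that $|g_i|=|\ol{g}_i|$: any preimage can be adjusted to a $\pi$-element of the appropriate order, because the coset $g_i\frat(G)$ contains an element whose order equals $|\ol{g}_i|$ (a Hall-type / Schur--Zassenhaus style argument applied inside the nilpotent-by-cyclic group $\langle g_i\rangle \frat(G)$). With this choice the orders $|g_1|,\dots,|g_d|$ are again pairwise coprime (and prime-powers in the PCIG case), since they coincide with the orders of the $\ol{g}_i$.

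It then remains to check that $\{g_1,\dots,g_d\}$ invariably generates $G$. Fix arbitrary $x_1,\dots,x_d\in G$ and set $K=\langle g_1^{x_1},\dots,g_d^{x_d}\rangle$. Passing to the quotient, the images $\ol{g}_i^{\,\ol{x}_i}$ generate $\ol{G}$ because $\{\ol{g}_1,\dots,\ol{g}_d\}$ invariably generates $\ol{G}$ and $\ol{x}_i$ ranges over $\ol{G}$. Hence $K\frat(G)=G$. By the defining property of the Frattini subgroup as the set of non-generators, $K\frat(G)=G$ forces $K=G$. Since $x_1,\dots,x_d$ were arbitrary, $\{g_1,\dots,g_d\}$ invariably generates $G$, and by construction its elements have pairwise coprime (prime-power) orders, so it is a CIG-set (PCIG-set) of $G$.

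The main obstacle is the first step of the reverse direction, namely producing preimages of prescribed order so that coprimality (and the prime-power condition) is genuinely preserved rather than merely the generation property. This requires care because an arbitrary preimage may have order divisible by primes dividing $|\frat(G)|$, which could destroy pairwise coprimality. The clean way is to observe that within each cyclic subgroup $\langle\ol{g}_i\rangle$ of $\ol{G}$ we can pick a generator and lift it to an element of equal order, using that in a finite group every element of $G/\frat(G)$ of order $n$ lifts to an element of $G$ of order $n$ (since the preimage group is an extension of a cyclic group by the nilpotent group $\frat(G)$, where such lifts exist); after that, everything else is formal.
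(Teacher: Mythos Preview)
Your overall strategy is the same as the paper's: the forward direction is trivial, and for the converse you lift a PCIG-set (or CIG-set) from $G/\frat(G)$ to $G$, then use the non-generator property of $\frat(G)$ to conclude invariable generation. That part is fine.

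The gap is your lifting claim. You assert that every element of $G/\frat(G)$ of order $n$ lifts to an element of $G$ of order exactly $n$, appealing to a ``Hall-type / Schur--Zassenhaus style argument'' in the nilpotent-by-cyclic group $\langle g_i\rangle\frat(G)$. But Schur--Zassenhaus requires coprimality between $n$ and $|\frat(G)|$, which is not given here, and the claim is in fact false: take $G=Q_8$, so that $\frat(G)=Z(G)$ has order $2$ and $G/\frat(G)\cong C_2\times C_2$. Every nontrivial element of the quotient has order $2$, while every element of $G\setminus\frat(G)$ has order $4$; no lift of order $2$ exists. Hence you cannot in general arrange $|g_i|=|\ol{g}_i|$, and your justification of ``the orders coincide with those of the $\ol{g}_i$'' breaks down.

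The paper avoids this by not insisting on equal order. Given a preimage $g_i$ with $|\ol{g}_i|=p_i^{n_i}$, one writes $g_i=x_iy_i$ inside the cyclic group $\langle g_i\rangle$ with $|x_i|$ a $p_i$-power and $|y_i|$ prime to $p_i$; since $\ol{g}_i$ is a $p_i$-element, $\ol{y}_i=1$, i.e.\ $y_i\in\frat(G)$, so $x_i\frat(G)=g_i\frat(G)$. The $x_i$ are then elements of pairwise distinct prime-power orders (possibly larger than $p_i^{n_i}$), and your Frattini argument finishes the job. For CIG one does the same with $\pi_i$ the set of primes dividing $|\ol{g}_i|$: take $x_i$ to be the $\pi_i$-part of $g_i$, so $|x_i|$ is a $\pi_i$-number and the $|x_i|$ are pairwise coprime. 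Replacing your order-preserving lift with this $\pi$-part lift fixes the proof.
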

\begin{proof}Let $F=\frat(G)$. Assume that $\{g_1F,\dots,g_dF\}$ is a PCIG-set of $G/F$ and let $|g_iF|=p_i^{n_i}.$
For every $i=1, \dots , d$, we can write $g_i=x_iy_i$ with $|x_i|$ a $p_i$-power and $|y_i|$ coprime with $p_i$; in particular $y_i\in F.$
It can be easily proved that $x_1,\dots,x_d$ is a PCIG-set of $G.$ The other implication is trivial. Similar arguments hold for CIG groups.
\end{proof}

On the other hand,  the property of being minimal-exponent  is not closed under epimorphic images, even when the kernel is contained in the Frattini
subgroup:
indeed, there exists a minimal-exponent group $G$ such that  $G/\frat(G)$ is not minimal-exponent.
%\comment{tagliato \sl{$G$ satisfies $(\eta)$ does not implies $G/\frat(G)$ satisfies $(\eta).$}}
For example, let  $G$ be the supersoluble minimal-exponent group  constructed in  Corollary \ref{12} for $p=3$:
then $G$ has
 order $3^4\cdot 2$ and exponent $18$. However, if $P$ is the Sylow 3-subgroup
of $G$, then $\frat(G)=\frat(P)$ and $G/\frat(G)$ is a group of order $18$ and exponent $6$
containing an element of order $6$: in particular $G/\frat(G)$ is not minimal-exponent.
Even the converse implication is false: there exists a finite group $G$ which is not  minimal-exponent
 but $G/\frat(G)$ is minimal-exponent.
%\comment{tagliato {\sl{$G/\frat(G)$ satisfies $(\eta)$ does not implies $G$ satisfies $(\eta).$}}}
For example, consider the dihedral group $K$ of order 12. Since $K/Z(K)\cong \GL(2,2)$ we have an action of
 $K$ on $H=C_2\times C_2$ with kernel $Z(K).$ Let $G$
be the semidirect product $H\rtimes K.$ Since $\exp(G)=\exp(K)=12$, $G$ is not minimal-exponent.
  However $\frat(G)=Z(K)$, hence $G/\frat(G)\cong \perm(4)$ is a minimal-exponent group.

\

%%%%%%%%%%%%%%%%%%%%%%%%%%%   {Homomorphic images of PCIG  %%%%%%%%%%%%%%%%

%\section{Further examples and some questions}\label{quest}

%As we noticed above, a  minimal-exponent group is  PCIG.
%As a consequence,
Notice that every epimorphic image of  a  minimal-exponent group   is  PCIG.
 We have seen that examples of PCIG groups that are not minimal-exponent can be
 easily constructed. However we don't know the answer to the following
intriguing question:

\begin{question} Is it true that any PCIG group
  appears as an epimorphic image of a minimal-exponent group?
\end{question}

In the following we want to discusses some examples and analyze some particular
instances of the previous question.

\

The first case that one would like to solve is that of finite simple groups.
By Theorem \ref{simple} every finite simple group satisfies PCIG,
 however a finite non abelian
simple groups is not necessarily minimal-exponent (for example $\exp(\alt(n-1))=\exp(\alt(n))$
except when $n=p^t$ with $p$ an odd prime or $n=2^t+2$ and $n-1$ is a prime-power). It should be interesting
to answer to the following questions.

\begin{question}Is it true that for any simple group $S$ there exists a  minimal-exponent finite group $G$
 admitting $S$ as a composition factor?
\end{question}

\begin{question}Is it true that for any simple group $S$ there exists a  minimal-exponent finite group $G$
 admitting $S$ as an epimorphic image?
\end{question}

Consider for example $G:=\alt(8);$ $G$ is not  minimal-exponent, since $\exp(\alt(8))=\exp(\alt(7))=420$,
however $G$ is a composition factor of $\sym(8),$ which is a minimal exponent group.
%and $\sym(8)$ satisfies $(\eta).$
 On the other hand, we don't know examples
of minimal-exponent groups admitting $\alt(8)$ as epimorphic image.
Similarly, $\exp(M_{11})=\exp(M_{12})$ %=\exp(\aut(M_{12}))$
 and $\exp(M_{23})=\exp(M_{24})$
and we don't know examples
of minimal-exponent groups admitting $M_{12}$ or $M_{24}$ as composition factor.

\

More in general, given a simple group $S$, we may define
$\beta_G(S)$ as the number of normal subgroups $N$ of $G$ with $G/N\cong S.$
If $G$ is a  CIG group and $S$ is abelian, then $\beta_G(S)\leq 1$ as a consequence of Proposition \ref{nilpotent}.
However the situation is different for non-abelian simple groups. At the end of this section we will show
that for any positive integer
$t,$ it can be exhibited a non abelian simple group $S$ such that $S^t$ is CIG.
It is not clear whether the number $\beta_G(S)$ can be arbitrarily large when $G$ is minimal-exponent.
One could be tempted to conjecture that $\beta_G(S)\leq 1$ if $G$ is minimal-exponent and $S$ is simple,
but this is wrong. Here we construct an example with $\beta_G(S)=2.$

\

\noindent{\bf{Example}}.
Let $T=\ssl(2,41).$ We have that $Z=Z(T)=\langle z \rangle$
where $z$ is the unique element of $T$ of order 2, moreover $T$
contains elements of order 8 and has exponent $8\cdot 3 \cdot 5 \cdot 7 \cdot 41.$ The factor group
$S=T/Z=\psl(2,41)$ has order $8\cdot 3 \cdot 5 \cdot 7 \cdot 41.$
The group $T$ has a transitive permutation representation $\varphi$ of degree 42 with $\ker\varphi=Z$; if $p\in \{3,5,7,41\}$,
then the wreath product $C_p\wr_\varphi T$ contains an element of order $p^2.$
 Now consider
$$X=(V_3\times V_5\times V_7 \times V_{41})\rtimes (T_1\times T_2)$$
where
\begin{itemize}
\item $T_1\cong T_2\cong \ssl(2,41),$ $V_p\cong C_p^{42},$
\item$T_1$ centralises $V_5$ and $V_7$ and acts on $V_3$ and $V_{41}$ permuting the 42 entries,
\item $T_2$ centralises $V_3$ and $V_{41}$ and acts on $V_5$ and $V_{7}$ permuting the 42 entries.
\end{itemize}
Let $Y=\langle (z,z)\rangle \leq Z(T_1\times T_2)$ and consider
$G=X/Y.$ Notice that
$\exp(G)=8\cdot 3^2 \cdot 5^2 \cdot 7^2 \cdot 41^2.$
Set $M=((V_3\times V_5\times V_7 \times V_{41})\times Z(T_1\times T_2))/Y\leq G$
and notice that  $G/M=S_1\times S_2$ with $S_i\cong PSL(2,41)$.
Let now  $H$ be a subgroup of $G$ with $\exp(G)=\exp(H)$. For $i=1,2$, consider the projections
 $\pi_i: HM/M\to S_i$ of $HM/M$ onto the $i$-th factor of $G/M=S_1\times S_2.$
%Let $M=((V_3\times V_5\times V_7 \times V_{41})\times Z(T_1\times T_2))/Y\leq G$
%and let $H$ be a subgroup of $G$ with $\exp(G)=\exp(H).$
%Clearly $G/M=S_1\times S_2$ with $S_i\cong PSL(2,41)$ and $HM/M\leq G/M.$
%Let $\pi_i: HM/M\to S_i$ the projection.
Since $H$ contains elements of order $3^2$ and
$41^2$, $\pi_1(HM/M)$ must contain elements of order 3 and 41. However
if $U$ is a maximal subgroup of $S$, then $|U|\in \{41\cdot 20, 60, 42, 40, 24\},$
and therefore we deduce that $\pi_1(HM/M)=S_1.$
Furthermore, since $H$ contains elements of order $5^2$ and
$7^2$, we have that $\pi_2(HM/M)$ must contain elements of order 5 and 7 and therefore
$\pi_2(HM/M)=S_2.$ Now suppose by contradiction that $HM/M \neq S_1\times S_2$.
This means that $HM/M=\{(s,s^\phi)\mid s \in S\}$ for some $\phi \in \aut(S).$
However $H$ must contains an element $h$ of order $8$: it is not restrictive to assume that $h=(t_1,t_2)Z\in (T_1\times T_2)Z$
with $t_2Z(T_1)=(t_1Z(T_2))^\phi$. In particular $|t_1|=|t_2|=8.$ But then $(t_1,t_2)^4=(z,z) \in Y$
and $|h|=4,$ a contradiction.
 Thus, we have proved that if $H$ is a subgroup of $G$ with the same exponent, then  $HM/M = S_1\times S_2.$

Now let  $G^*$ be a  subgroup  of $G$ minimal with respect to the property that
$\exp(G^*)=\exp(G)$:
 then   $G^*$ is a minimal-exponent group and, by the above discussion, $\beta_{G^*}(S)=2.$

\

We don't know other examples substantially different from the previous one;
in particular we don't know whether it is possible to exhibit a  minimal-exponent group $G$
  with $\beta_G(S)=3$ for some non abelian simple
group $S$. The situation is different if we consider finite PCIG groups.
To construct our next examples, we need to introduce some more notations.

\

    Let $S$ be a non-abelian finite simple group.
    Define the set
    $$ \mathcal{X}_d=\{(x_1,\dots,x_d) \in S^d
     \mid \{x_1, \ldots ,x_d\} \ \textrm{invariably generates } S\} $$
     and consider the equivalence relation on $\mathcal{X}_d$:
 $(x_1,\dots,x_{d}) \sim
   (y_1, \dots, y_{d})$
if and only if there exist $(s_1,\dots,s_d)  \in S^d$ and
$\alpha \in \aut(S) \ \mathrm{ s.t. } \
    (x_1^{s_1\alpha},\dots,x_d^{s_d\alpha})
    =(y_1,\dots,y_d).$

\

For reader's convenience, we sketch the proof of the following elementary criterion.

\begin{lemma}\label{columns}
 Let $G=S^t$ for a non-abelian finite simple group  $S$ and an integer $t$.
For $i\in \{1,\dots,d\}$, let $g_i=(g_{1,i},\dots,g_{t,i})\in S^t.$
 Then  $\{g_1, \ldots , g_d\}$ invariably generates $G$ if and only if
$(g_{i,1},\dots,g_{i,d}) \in \mathcal{X}_d$ for all $i$ and
$(g_{i,1},\dots,g_{i,d})
\not\sim
(g_{j,1},\dots,g_{j,d})$ whenever $i\neq j.$
\end{lemma}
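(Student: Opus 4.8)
The plan is to reduce the statement to two standard facts about subgroups of a direct power of a non-abelian simple group, and then to track how coordinatewise conjugation interacts with the independence of the coordinates of the conjugators. Write $S_1,\dots,S_t$ for the $t$ copies of $S$, let $\pi_k$ denote the projection of $S^t$ onto $S_k$, and $\pi_{\{i,j\}}$ the projection onto $S_i\times S_j$. I will use the Goursat-type structural fact that a subgroup $K\le S^t$ equals $S^t$ if and only if $\pi_k(K)=S$ for every $k$ and $\pi_{\{i,j\}}(K)=S_i\times S_j$ for every $i\neq j$, together with its companion that a proper subdirect subgroup of $S\times S$ is a diagonal $D_\beta=\{(s,\beta(s)):s\in S\}$ for some $\beta\in\aut(S)$. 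Both are elementary consequences of the simplicity of $S$, and I will either cite them or dispatch the first by a one-line induction on $t$.

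The main observation is that conjugation acts coordinatewise and the coordinates of the conjugators vary freely and independently. Writing $x_l=(x_{1,l},\dots,x_{t,l})\in S^t$, the $k$-th coordinate of $g_l^{x_l}$ is $g_{k,l}^{x_{k,l}}$, so $\pi_k(\langle g_1^{x_1},\dots,g_d^{x_d}\rangle)$ depends only on $x_{k,1},\dots,x_{k,d}$, and $\pi_{\{i,j\}}$ of the same subgroup depends only on the $i$- and $j$-coordinates of the conjugators; as $(x_l)_l$ ranges over $(S^t)^d$, these partial data range independently over $S^d$. Combining this with the structural fact, $\{g_1,\dots,g_d\}$ invariably generates $S^t$ if and only if, for every choice of conjugators, all single projections are onto $S$ and all pairwise projections are full; and since a universal quantifier distributes over the conjunction, the two requirements decouple.

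The requirement that $\pi_k$ be onto for every conjugator is, by the independence above, precisely that the row $\rho_k:=(g_{k,1},\dots,g_{k,d})$ invariably generates $S$, i.e. $\rho_k\in\mathcal{X}_d$; this gives condition (1). Assuming (1), each $\pi_{\{i,j\}}(K)$ is subdirect in $S\times S$, hence either full or a diagonal $D_\beta$, and I will show that a diagonal can be produced by some choice of conjugators if and only if $\rho_i\sim\rho_j$. Unwinding the definitions, the $\{i,j\}$-projection can be forced into $D_\beta$ exactly when $g_{j,l}$ lies in the $S$-conjugacy class of $\beta(g_{i,l})$ for every $l$, and this says precisely that $\rho_j$ is obtained from $\rho_i$ by applying the single automorphism $\beta$ and then conjugating each entry separately, i.e. $\rho_i\sim\rho_j$ with $\alpha=\beta$. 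Hence, for all conjugators, every pairwise projection is full if and only if $\rho_i\not\sim\rho_j$ for all $i\neq j$, which is condition (2).

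The step I expect to require the most care is the matching in the last paragraph: one must verify that the set of values $\{\,b\,\beta(a^{-1}g_{i,l}a)\,b^{-1}:a,b\in S\,\}$ attainable in the second coordinate of a forced diagonal is exactly the $S$-conjugacy class of $\beta(g_{i,l})$, and that a single $\beta$ can be used uniformly across all $l$ — this is exactly what ties the coordinatewise-independent conjugators to the single $\alpha$ in the definition of $\sim$. The only other point to watch is the degenerate case $t=1$, where there are no pairs and the criterion collapses to $\rho_1\in\mathcal{X}_d$, i.e. to invariable generation of $S$ itself.
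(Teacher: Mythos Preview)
Your argument is correct and follows the same line as the paper's sketch: the paper likewise reduces invariable generation of $S^t$ to the two requirements that (i) each row of the matrix $R$ lies in $\mathcal{X}_d$ and (ii) no choice of conjugators lands the generated subgroup inside a ``diagonal'' subgroup $\{(h_1,\dots,h_t)\in S^t:h_i^\alpha=h_j\}$ for some $i\neq j$ and $\alpha\in\aut(S)$, and then identifies (ii) with the non-equivalence of rows under $\sim$. You have simply made explicit the Goursat-type structural fact and the coordinatewise bookkeeping that the paper leaves to the reader.
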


\begin{proof}
Consider the matrix $R$ whose columns correspond to the elements $g_1,\dots,g_d:$
$$R = \begin{pmatrix}
g_{1,1} & g_{1,2} & \cdots & g_{1,d} \\
g_{2,1} & g_{2,2} & \cdots &g_{2,d} \\
\vdots & \vdots& \ddots & \vdots \\
 g_{t,1} & g_{t,2} & \cdots & g_{t,d}
 \end{pmatrix}.$$
A necessary condition to have that $\{ g_1, \ldots , g_d\}$ invariably generates $G$,
is that each row of $R$ belongs to $\mathcal{X}_d$.
This is not enough: we must be sure that no subgroup generated by conjugates
of the elements $g_1,\dots,g_d$ is contained in the
diagonal subgroup  $\{ (h_1, \cdots , h_t) \in S^t \mid h_i^\alpha=h_j \}$
for some choice of $i\neq j$ and $\alpha\in \aut S.$ This is equivalent
to say that no pair of different rows of the matrix $R$ are equivalent.
\end{proof}

As a first application of this Lemma, we show that $\alt(5)^t$ is  CIG (and PCIG) if and only if $t \le 2$.

\begin{prop}
The group $\alt(5)^3$  is not CIG, while the group  $\alt(5)^2$ is PCIG.
%does not satisfy CIG-property and $\alt(5)^2$ satisfies PCIG-property.
\end{prop}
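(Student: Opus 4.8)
The plan is to apply Lemma~\ref{columns} in both directions, exploiting the structure of the invariable-generation data for the single group $S=\alt(5)$. First I would record the relevant arithmetic facts about $\alt(5)$: it has order $2^2\cdot 3\cdot 5$, its element orders are $1,2,3,5$, its outer automorphism group has order $2$ (so $\aut(\alt(5))\cong\perm(5)$), and the two classes of $5$-cycles are fused by the outer automorphism. A set of pairwise-coprime prime-power elements invariably generating $\alt(5)$ must include an element of order $5$, an element of order $3$, and an element of order $2$ (each proper subgroup omits at least one prime), so the natural PCIG-candidate rows live in $\mathcal X_3$ with a column of $5$-elements, a column of $3$-elements, and a column of $2$-elements. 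The key combinatorial observation is that, up to the equivalence $\sim$ (conjugacy together with $\aut(S)$), there are only a bounded number of distinct rows available in $\mathcal X_3$ of this coprime shape, and in fact fewer than three.

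For the positive statement that $\alt(5)^2$ is PCIG, I would exhibit two explicit rows of $\mathcal X_3$ that are inequivalent under $\sim$. Concretely, take a $5$-cycle $\sigma$, a $3$-cycle $\tau$, and a double transposition $\rho$, and form two rows using the two non-conjugate powers of the $5$-cycle (e.g.\ $\sigma$ in one row and $\sigma^2$ in the other, keeping the $3$- and $2$-parts arranged so that each row still invariably generates). One checks each row lies in $\mathcal X_3$ by the transitivity/primitivity argument already used for $\alt(n)$ in the proof of Theorem~\ref{simple}: a subgroup containing conjugates of a $5$-cycle, a $3$-cycle and an involution is transitive, hence primitive, hence all of $\alt(5)$. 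Then I would verify the two rows are not $\sim$-equivalent: since $\aut(\alt(5))=\perm(5)$ fuses the two $5$-cycle classes but acts in a controlled way, a careful bookkeeping of the triple of conjugacy-class data shows the rows cannot be matched by any $(s_1,s_2,s_3)$ and $\alpha$. By Lemma~\ref{columns}, $\{g_1,g_2,g_3\}$ with these two rows invariably generates $\alt(5)^2$, and since the column orders are $5,3,2$ (prime-powers and coprime), this is a PCIG-set.

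For the negative statement that $\alt(5)^3$ is not CIG, the strategy is a counting/pigeonhole argument via Lemma~\ref{columns}. A CIG-set of $\alt(5)^3$ would, by the lemma, require three pairwise $\sim$-inequivalent rows, all lying in $\mathcal X_d$ for some $d$, with the additional constraint that the columns have pairwise coprime orders. The coprimality forces $d\le 3$ (only three primes divide $|\alt(5)|$) and forces the column-order profile to be essentially $(5,3,2)$. The crux is then to show that $\mathcal X_3$, restricted to rows of this coprime prime-power shape, contains at most two $\sim$-classes. This reduces to counting: the $3$-element and the $2$-element each form a single class up to $\aut(S)$ (all $3$-cycles are conjugate, all double transpositions are conjugate, and $\aut$ does not split these), while the $5$-element contributes the two power-classes $\{\sigma,\sigma^4\}$ and $\{\sigma^2,\sigma^3\}$ which $\aut=\perm(5)$ fuses down — so after accounting for simultaneous conjugation and the outer automorphism one finds only two inequivalent admissible rows, not three.

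The main obstacle I expect is the equivalence-class count for the negative direction: one must rule out every coprime prime-power configuration, not just the ``obvious'' $(5,3,2)$ profile, and one must correctly track how the diagonal conjugation action $(s_1,\dots,s_d)$ interacts with a \emph{single} outer automorphism $\alpha$ applied uniformly across the row. In particular the subtlety is that $\alpha$ is applied to the whole row at once (so it fuses the two $5$-classes simultaneously with whatever it does to the $3$- and $2$-parts), which is exactly what collapses the count from a naive ``three or more'' down to two. Getting this bookkeeping right — showing that no choice of conjugators can separate a would-be third row from the two already produced — is where the real work lies; the transitivity argument placing each individual row in $\mathcal X_3$ is routine by comparison.
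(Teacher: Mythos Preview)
Your overall plan via Lemma~\ref{columns} is the right one, but the specific construction for $\alt(5)^2$ does not work, and the same miscount propagates into your argument for $\alt(5)^3$. Take $\sigma$ a $5$-cycle, $\tau$ a $3$-cycle, $\rho$ a double transposition. You propose the rows $(\sigma,\tau,\rho)$ and $(\sigma^2,\tau,\rho)$. But the outer automorphism $\alpha\in\aut(\alt(5))\setminus\inn(\alt(5))$ sends the $\alt(5)$-class of $\sigma$ to that of $\sigma^2$, while $3$-cycles and double transpositions each form a single $\alt(5)$-class (hence are $\alpha$-stable up to inner conjugation). Thus one can choose $s_1,s_2,s_3\in S$ with $\sigma^{s_1\alpha}=\sigma^2$, $\tau^{s_2\alpha}=\tau$, $\rho^{s_3\alpha}=\rho$, so your two rows are $\sim$-equivalent and Lemma~\ref{columns} gives nothing. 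In fact this computation shows that among rows with entries of orders exactly $5,3,2$ there is only \emph{one} $\sim$-class, not two; so your ``two inequivalent admissible rows'' in the negative direction is also off, and as stated your pigeonhole does not even exclude $t=2$.

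The missing idea, which the paper uses, is to allow the identity as an entry. The pair $\{\sigma,\tau\}$ already invariably generates $\alt(5)$ (no maximal subgroup of $\alt(5)$ has order divisible by $15$), so $(\sigma,\tau,1)\in\mathcal X_3$. The two rows $(\sigma,\tau,\rho)$ and $(\sigma,\tau,1)$ are obviously $\sim$-inequivalent (one has a nontrivial third coordinate, the other does not), and Lemma~\ref{columns} then gives a PCIG-set $\{(\sigma,\sigma),(\tau,\tau),(\rho,1)\}$ for $\alt(5)^2$. For $\alt(5)^3$, once you observe (as you did) that each row must contain an element of order $5$ and one of order $3$, coprimality forces these into fixed columns, and the third column carries $2$-elements or the identity; by the calculation above the only $\sim$-classes of such rows are $(\sigma,\tau,\rho)$ and $(\sigma,\tau,1)$, so three pairwise inequivalent rows are impossible.
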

\begin{proof}
Let $S=\alt(5)$. The set of prime divisors of $|S|$ is $\{2,3,5\}$ and the representatives of its conjugacy classes are
 $1$,   $\tau=(1,2)(3,4)$, $\rho=(1,2,3)$, $\sigma=(1,2,3,4,5)$ and $\sigma^\alpha$  where $\alpha=(1,2) \in \aut S$.
% $S$ has one conjugacy class  of non-trivial  $2$ elements,  rapresented by  $\tau=(1,2)(3,4)$,
%one conjugacy  class of   non-trivial $3$-elements,  rapresented by $\rho=(1,2,3)$,  and  two conjugacy classes of non-trivial $5$-elements, rapresented by $\sigma=(1,2,3,4,5)$ and $\sigma^\alpha$  where $\alpha=(1,2) \in \aut S$.
The elements $\tilde{g_1}=(\sigma,\sigma)$,
 $\tilde{g_2}=(\rho,\rho)$ and $\tilde{g_3}=( \tau,1)$ invariably generate $\alt(5)^2$, and hence
$\alt(5)^2$ is PCIG and CIG.
Now consider the case of $G=S^3$ and assume by contradiction that
$G$ admits a CIG-set $X$. Note that any invariable generating set  of $S$ has to contain an element of order 5
(otherwise suitable conjugates of the elements of this set are all contained in the same point stabilizer) and an element
of order 3 (otherwise suitable conjugates of the elements of this set are all contained in the normalizer of a Sylow 5-subgroup).
This implies that it is not restrictive to assume that the elements of $X$ are the following:
$g_1=(\sigma^{\delta_1},\sigma^{\delta_2},\sigma^{\delta_3})  $ with $\delta_i \in \{1, \alpha\}$, $g_2=(\rho,\rho,\rho)$
and   $g_3=(\tau^{\epsilon_1},\tau^{\epsilon_2},\tau^{\epsilon_3})$  with $\epsilon_i \in \{0,1\}$. Since $(\tau^{\epsilon_i})^\alpha= (\tau^{\epsilon_i})^{s_1}$  and $\rho^\alpha=\rho^{s_2}$  for some $s_1, s_2 \in S$,
$(\sigma,\rho,\tau^{\epsilon_i})$ is equivalent to
$(\sigma^\alpha,\rho,\tau^{\epsilon_i}).$
Therefore, we have only two
 choices, up to equivalence, for the rows of the matrix $R$, whose columns are $g_1,g_2 , g_3$, namely
 $(\sigma,\rho,\tau)$ and
$(\sigma,\rho,1)$, but then by Lemma  \ref{columns},   $\{g_1, g_2 , g_3\}$ cannot invariably generate $\alt(5)^3$, a contradiction.
\end{proof}

We conclude by showing that for every $t\ge 1$ there is  a PCIG group $G$  with $\beta_S(G)\geq t$ for some choice of $S.$

\begin{prop}\label{S^t}
For any positive integer $t$ there exists a simple group $S$ with
the property that $S^t$ is PCIG.
\end{prop}
\begin{proof}
%Let $n=p^u$ with $p$ and odd prime;
If $p$ is a large enough prime then $\alt(p)$ contains at least $t$ 2-elements $\rho_1,\dots,\rho_t$ which are not
pairwise conjugate in $\sym(p)$. Let $\sigma$ be a cycle of length $p$
and let $\tau$ be a cycle of length $3.$
Consider the elements $g_1=(\sigma,\dots,\sigma),$ $g_2=(\tau,\dots,\tau),$
$g_3=(\rho_1,\dots,\rho_t)$:  % and set $g_i=1$ for remaining prime divisors of  $\alt(p);$
 then $g_1, g_2 , g_3$
  satisfy the criterium of Lemma \ref{columns}, and hence $\{g_1, g_2 , g_3\}$ is a PCIG-set for $\alt(p)^t.$
\end{proof}

%%%%%%%%%%%%%%%%%%%%%%%%%%%%%%%%%%%%%%%%%%%%%%%%%


\begin{thebibliography}{30}
\bibitem{AG} M. Aschbacher, R. M. Guralnick,  Some applications of
the first cohomology group. {J. Algebra},{90} (1984),
446--460.
\bibitem{classes}
A. Ballester-Bolinches, L.~M. Ezquerro, {Classes of finite
  groups}, Mathematics and Its Applications (Springer), vol. 584, Springer,
  Dordrecht, 2006.
\bibitem{bg} M. Bianchi, A. Gillio Berta, P. Hauck,
Maximal supersoluble subgroups of symmetric groups.
Ann. Mat. Pura Appl. (4) 159 (1991), 371--404.
\bibitem{cl}  H. Cardenas, E. Lluis, El normalizador del $p$-grupo de Sylow del grupo simetrico $S_{p^n}$,
Bol. Soc. Mat. Mexicana, 9 (1964), pp. 1-6.
\bibitem{dw} E. Crestani, A. Lucchini,
$d$-Wise generation of prosovable groups,
J. Algebra 369 (2012), 59--69.
\bibitem{crowns} E. Detomi, A. Lucchini, Crowns and factorization of
the probabilistic zeta function of a finite group, {J.
Algebra}, {265} (2003), no. 2, 651--668.
%\bibitem{PLN}E. Detomi, A.~Lucchini,  {Probabilistic generation of finite groups
% with a unique minimal normal subgroup}. {J. London Math.~Soc.} {87:3} (2013) 689--706.
\bibitem{CIG}
E. Detomi, A.~Lucchini, C. M. Roney-Dougal,  {Coprime invariable generation and minimal-exponent groups,}
preprint.

\bibitem{dixon}
J. D. Dixon,
Random sets which invariably generate the symmetric group, Discrete Math 105 (1992) 25-39.
\bibitem{dm}
J. D. Dixon,  B. Mortimer, {Permutation groups},
Graduate Texts in
Mathematics, vol. 163, Springer-Verlag, New York, 1996.

%\bibitem{centralizer} J. Fulman \& R. Guralnick,
%Bounds on the number and sizes of conjugacy classes in finite
%Chevalley groups with applications to derangements.
%{\em Trans.~Amer.~Math.~Soc.} {\bf 364:6} (2012), 3023--3070.
\bibitem{Ga} W. Gasch\"utz, Praefrattinigruppen, {Arch. Mat.}
{13} (1962) 418--426.
\bibitem{Gi} M. Giudici, Quasiprimitive groups with no fixed point free elements of prime order, J. London Math. Soc. (2) 67 (2003), no. 1, 73–-84.
\bibitem{ig} W. M. Kantor, A. Lubotzky, A. Shalev,
Invariable generation and the Chebotarev invariant of a finite group,
J. Algebra 348 (2011), 302--314.
\bibitem{107} M. Liebeck, C. E. Praeger, J. Saxl,
Transitive subgroups of primitive permutation groups,
J. Algebra 234 (2000), no. 2, 291–361.
\bibitem{am} A, Lucchini, A. Maroti,
On the clique number of the generating graph of a finite group,
Proc. Amer. Math. Soc. 137 (2009), no. 10, 3207--3217.
%\bibitem{lms}A. Lucchini, M. Morigi, P. Shumyatsky,
%Boundedly generated subgroups of finite groups. Forum Math. 24, No. 4, 875-887 (2012).
\bibitem{na} J. Nagura. On the interval containing at least one prime number. Proc. Japan Acad. 28
(1952), 177–-181.
\bibitem{st} U. Stammbach,
Cohomological characterisations of finite solvable and nilpotent groups,
J. Pure Appl. Algebra 11 (1977/78), no. 1--3, 293--301.
\end{thebibliography}
\end{document}